\theoremstyle{plain}
\newtheorem{theorem}[subsection]{{\bf Theorem}}
\newtheorem*{theorem*}{{\bf Theorem}}
\newtheorem{corollary}[subsection]{{\bf Corollary}}
\newtheorem*{corollary*}{{\bf Corollary}}
\newtheorem{proposition}[subsection]{{\bf Proposition}}
\newtheorem{lemma}[subsection]{{\bf Lemma}}
\theoremstyle{definition}
\theoremstyle{remark}
\numberwithin{equation}{subsubsection}
\def\ZZ{\mathbb Z}
\def\CC{\mathbb C}
\def\FF{\mathbb F}
\DeclareMathOperator\UT{UT}
\DeclareMathOperator\GL{GL}
\DeclareMathOperator\HH{H}
\DeclareMathOperator\MM{M}
\begin{document}
\baselineskip=15pt
\title{Schur multipliers of unitriangular groups}
\author{Urban Jezernik}
\address{Institute of Mathematics, Physics, and Mechanics \\ Ljubljana \\ Slovenia}
\email{urban.jezernik@imfm.si}
\date{\today}
\subjclass[2010]{20D15}
\keywords{unitriangular group, Schur multiplier}
\begin{abstract}
An explicit formula for the Schur multiplier of the group of unitriangular matrices over products of cyclic rings $\ZZ/m\ZZ$ and $\ZZ$ is derived. We use it to provide presentations of the corresponding covering groups and touch upon the question of finding their nonisomorphic representatives.
\end{abstract}
\maketitle
\section{Introduction}
\noindent 
Let $G$ be a group. The second integral homology group $\HH_2(G, \ZZ)$, also denoted by $\MM(G)$, is known as the \emph{Schur multiplier} of $G$. When the group $G$ is finite, $\MM(G)$ is isomorphic to the cohomology group $\HH^2(G, \CC)$. This group has several important applications in the theory of projective representations and the theory of central extensions \cite{Kar87}, and its implicit study goes back to Schur \cite{Sch07}.

The Schur multiplier of the general linear group $\GL_n(\FF_p)$ over the finite field with $p$ elements has been computed only recently while determining its non-abelian tensor square \cite{Erfanian08}. It turns out to be trivial whenever $(n,p) \notin \{ (3,2), (4,2) \}$, and isomorphic to $\ZZ/2\ZZ$ otherwise. These special cases can easily be dealt with using techniques of \cite{Nickel93} with the help of computational tools \cite{GAP}.

On the other hand, the structure of the multipliers of the Sylow subgroups of $\GL_n( \FF_p)$ is a lot more abundant. The Sylow $p$-subgroup corresponds to the unitriangular group $\UT_n( \FF_p)$, and the rest are given as wreath products of the unitriangular group and cyclic groups \cite{AlAli05}. Using Blackburn's results \cite{Blackburn72}, calculating the multipliers of the latter groups is routine once the multiplier of the unitriangular group is determined. This has been done before for the mod-$p$-multiplier of unitriangular groups over rings $\ZZ/m\ZZ$ for odd $m$ using a more homological approach, which seems to become overly complicated in the general case, c.f. \cite{Evens72, Biss01}. 

We complete this task by providing an explicit formula for the Schur multiplier of the unitriangular group over $\ZZ/m\ZZ$ for all integers $m$. The methods used extend to the case when the ring is a direct product of cyclic rings $\ZZ/m\ZZ$ and $\ZZ$. The result goes as follows.

\begin{theorem} \label{t:main-Intro}
The Schur multiplier of $\UT_n(\ZZ/m\ZZ)$ is isomorphic to
	\[
		\textstyle (\ZZ/m\ZZ)^{\binom{n}{2} - 1} \text{ for odd $m$,} \qquad (\ZZ/2\ZZ)^{n-3} \oplus (\ZZ/\frac{m}{2}\ZZ)^{n-2} \oplus (\ZZ/m\ZZ)^{\binom{n-1}{2}}  \text{ for even $m$.}
	\]
The Schur multiplier of $\UT_n(\prod_{i=1}^r \ZZ/m_i\ZZ)$, where $m_i | m_{i+1}$, is isomorphic to
 	\[
		\bigoplus_{i=1}^r \MM(\UT_n( \ZZ/m_i\ZZ)) \oplus \bigoplus_{i=1}^{r-1} (\ZZ/{m_i}\ZZ)^{i(n-1)^2}.
	\]
\end{theorem}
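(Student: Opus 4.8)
The plan is to deduce both formulas from a single computation over prime-power rings, using the Künneth formula to pass between direct products of groups and their factors. Concretely, I would establish two formal reductions and then carry out the core calculation for $\UT_n(\ZZ/p^a\ZZ)$.

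The product assertion is the more formal of the two. A matrix over a product ring is precisely a tuple of matrices over the factors, so $\UT_n(\prod_{i=1}^r \ZZ/m_i\ZZ) \cong \prod_{i=1}^r \UT_n(\ZZ/m_i\ZZ)$. Writing $G_i = \UT_n(\ZZ/m_i\ZZ)$ and iterating the Künneth formula for the second homology of a direct product, one obtains
\[
  \MM\Bigl(\prod_{i=1}^r G_i\Bigr) \cong \bigoplus_{i=1}^r \MM(G_i) \oplus \bigoplus_{1 \le i < j \le r} \bigl( G_i^{\mathrm{ab}} \otimes_{\ZZ} G_j^{\mathrm{ab}} \bigr),
\]
the $\mathrm{Tor}$-terms dropping out because $\HH_0$ is free. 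The abelianization of each factor is carried by its superdiagonal entries, so $G_i^{\mathrm{ab}} \cong (\ZZ/m_i\ZZ)^{n-1}$, and since $\ZZ/m_i\ZZ \otimes \ZZ/m_j\ZZ \cong \ZZ/\gcd(m_i,m_j)\ZZ$, the divisibility hypothesis $m_i \mid m_{i+1}$ makes every cross term a power of $\ZZ/m_{\min(i,j)}\ZZ$; collecting these over all pairs is an elementary bookkeeping step yielding the second summand. Applying the very same identity to the Chinese remainder factorization $\ZZ/m\ZZ \cong \prod_p \ZZ/p^{v_p(m)}\ZZ$ reduces the first assertion to prime powers: the cross terms now pair coprime cyclic groups and hence vanish, so $\MM(\UT_n(\ZZ/m\ZZ)) \cong \bigoplus_p \MM(\UT_n(\ZZ/p^{v_p(m)}\ZZ))$, and the odd/even split of the formula mirrors the split between odd primes and $p=2$.

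The remaining task, computing $\MM(\UT_n(\ZZ/p^a\ZZ))$, is the heart of the matter. I would work from the presentation on the generators $x_{ij} = I + e_{ij}$ for $1 \le i < j \le n$, subject to the unitriangular commutator relations $[x_{ij},x_{jk}] = x_{ik}$ (with commuting for non-overlapping indices) and the torsion relations $x_{ij}^{p^a} = 1$, and then invoke Hopf's formula $\MM(G) \cong (N \cap [F,F])/[F,N]$ for $G = F/N$. The commutator relations should account for the bulk of the multiplier as free $\ZZ/p^a\ZZ$-modules of the expected ranks; the arithmetic distinguishing the two cases enters through the interaction of the power relations with these commutators. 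Lifting $x_{ij}^{p^a}$ into the covering group produces a correction governed by $\binom{p^a}{2} = p^a\,\tfrac{p^a-1}{2}$, which is divisible by $p^a$ for odd $p$ but only by $2^{a-1}$ when $p=2$. This one divisibility fact is what should demote $n-2$ of the would-be $\ZZ/p^a\ZZ$ summands to $\ZZ/2^{a-1}\ZZ = \ZZ/\tfrac{m}{2}\ZZ$ and create the extra $n-3$ copies of $\ZZ/2\ZZ$, leaving the remaining $\binom{n-1}{2}$ summands of full order.

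The main obstacle, I expect, is precisely to pin down $[F,N]$ finely enough to separate this $2$-torsion, which requires matching upper and lower bounds. For the upper bound one rewrites each element of $N \cap [F,F]$ modulo $[F,N]$ in a normal form indexed by the commutators and the lifted powers; organizing this collection process by induction on $n$ along the normal series of $\UT_n$ keeps it tractable. For the lower bound one must realize each claimed cyclic summand by an explicit central extension (equivalently a $2$-cocycle) of the stated order, certifying that no unexpected collapse occurs. The genuinely delicate point is to verify that the $\binom{p^a}{2}$ corrections do not conspire to merge or kill summands when $p=2$; it is there that the dichotomy has to be checked by hand rather than read off, and I would treat it as the crux of the whole argument.
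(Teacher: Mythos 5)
Your Künneth strategy for the product claim is methodologically sound --- $\UT_n(\prod_{i=1}^r \ZZ/m_i\ZZ) \cong \prod_{i=1}^r \UT_n(\ZZ/m_i\ZZ)$, the formula $\MM(G\times H)\cong \MM(G)\oplus\MM(H)\oplus(G^{\mathrm{ab}}\otimes H^{\mathrm{ab}})$, and $\UT_n(\ZZ/m\ZZ)^{\mathrm{ab}}\cong(\ZZ/m\ZZ)^{n-1}$ are all correct --- but the ``elementary bookkeeping step'' you decline to perform is exactly where the proposal breaks, and this is a genuine gap. Carrying it out: each pair $i<j$ contributes $(\ZZ/\gcd(m_i,m_j)\ZZ)^{(n-1)^2}=(\ZZ/m_i\ZZ)^{(n-1)^2}$, and a fixed index $i$ is the smaller index of exactly $r-i$ pairs, so the cross terms sum to $\bigoplus_{i=1}^{r-1}(\ZZ/m_i\ZZ)^{(r-i)(n-1)^2}$: the \emph{smallest} moduli occur most often. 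The statement asserts exponents $i(n-1)^2$, so the \emph{largest} moduli occur most often. The two agree for $r=2$ or when all $m_i$ coincide, but not in general: for $n=3$, $(m_1,m_2,m_3)=(2,4,4)$, Künneth gives cross terms $(\ZZ/2\ZZ)^{8}\oplus(\ZZ/4\ZZ)^{4}$, while the stated formula gives $(\ZZ/2\ZZ)^{4}\oplus(\ZZ/4\ZZ)^{8}$ --- these do not even have the same cardinality. Since the Künneth formula is not negotiable, your argument, done honestly, refutes the second half of the statement for $r\ge 3$ rather than proving it; as written, the assertion that the bookkeeping ``yields the second summand'' is simply false. (This in fact exposes a defect in the paper itself: the independence argument is only ever carried out for $r=2$ (Lemma \ref{lem:uniqueProducts}), and the claimed basis in Table \ref{tab:3} is not independent --- e.g.\ $[s_i(1),s_i(y)]$ and $[s_i(x),s_i(y)]$, both listed with order $m_2$, satisfy $[s_i(1),s_i(y)]^{m_1}=[s_i(x),s_i(y)]^{m_1}$ in $F/[R,F]$ because $s_i(1)^{m_1}s_i(x)^{-m_1}\in R$ is central there, so together they span a group of order $m_1m_2$, not $m_2^2$. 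The correct exponent is $(r-i)(n-1)^2$, consistent with Theorem \ref{thm:main3} when $r=2$.)

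The first claim is not established either, though here your reductions are fine: the CRT splitting $\MM(\UT_n(\ZZ/m\ZZ))\cong\bigoplus_p \MM(\UT_n(\ZZ/p^{v_p(m)}\ZZ))$ is valid (coprime abelianizations kill the cross terms), and recombining the prime-power answers does reproduce the odd/even formula. But the prime-power computation itself --- the part you rightly call the crux --- is only a plan. Moreover, the one mechanism you name, $\gcd(p^a,\binom{p^a}{2})$ being $p^a$ for odd $p$ but $2^{a-1}$ for $p=2$, accounts only for the demotion of the $n-2$ summands to $\ZZ/\frac{m}{2}\ZZ$; the $(\ZZ/2\ZZ)^{n-3}$ part has a different source entirely. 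In the paper it is carried by the weight-four relators $[[s_i,s_{i+1}],[s_{i+1},s_{i+2}]]$, whose order is bounded by $2$ via a Hall--Witt computation (Lemma \ref{lem:orders}) and shown to be exactly $2$ by an induction on $n$ anchored at the machine-verified base case $\MM(\UT_4(\ZZ/2\ZZ))\cong(\ZZ/2\ZZ)^4$ (Lemma \ref{lem:unique}). Your sketch neither identifies these elements nor offers a route to rule out their collapse, so the matching of upper and lower bounds that you defer is left open precisely where the real work lies.
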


In general, there are two basic approaches in determining the Schur multiplier of a group -- either using a combinatorial description of the multiplier, provided by Hopf's integral homology formula \cite{Kar87}, or using algorithms from the theory of polycyclic groups \cite{Nickel93}. The latter are more suited for computer computations, while Hopf's formula requires a neat presentation of the given group. 
We rely here on some fairly recent results of \cite{Biss01}, where certain presentations of the unitriangular groups in terms of generators and relators are given. These presentations are proved to be minimal for the groups $\UT_n(\ZZ/m\ZZ)$ with $m$ odd. We use this presentation and Hopf's formula to obtain a description of $\MM(\UT_n(\ZZ/m\ZZ))$ first, and then generalize the technique of the calculation to the case of unitriangular groups over products of such rings. As a consequence, Theorem \ref{t:main-Intro} may be applied to the work of \cite{Biss01} in proving that the presentations of the groups $\UT_n(\ZZ/2\ZZ)$ and $\UT_n(\ZZ)$ are also minimal.

Another advantage of the above approach is that it provides a simple way to obtain free presentations of all covering groups of the unitriangular groups $\UT_n(\FF_p)$. This in a way solves a problem posed by Berkovich in \cite{Ber08}, asking for the multiplier of $\UT_n(\FF_p)$ and a description of its covering groups. The remaining issue is determining isomorphism representatives of these groups. It is well-known that they all belong to the same isoclinism family, and we show that in the case $n=3$, any stem group of the family is a covering group and there are precisely $p+7$ of them whenever $p > 3$. It is not so for $n > 3$, where the appearance of an additional central element causes certain complications. We provide experimental data, which indicates that the situation gets somewhat out of control in this case. 

Our methods do not seem to be applicable in a more general setting of determining the multipliers of unitriangular groups over any finite field. In principle, this could be done by tackling the problem with the theory of central extensions of polycyclic groups \cite{Nickel93}, since unitriangular groups admit an efficient polycyclic presentation. Nonetheless, these techniques quickly become far too complicated to do them by hand. 
\section{Multipliers over cyclic rings}
\noindent We first deal with unitriangular groups over the rings $\ZZ/m\ZZ$ for any integer $m$. Recall from \cite{Biss01} that a presentation of $\UT_n(\ZZ/m\ZZ)$ is given by the set of generators $\mathcal S = \{ s_i \mid 1 \leq i \leq n-1 \}$ subject to the set $\mathcal R$ of relators in Table \ref{tab:relators}. Furthermore, relators in the final row of this table are unnecessary for odd $m$. Here, the generators $s_i$ correspond to elementary matrices $I + E_{i,i+1}$ in the unitriangular group.
 
\begin{table}[h]
	\caption{Relators in the presentation of $\UT_n(\ZZ/m\ZZ)$.}
	\label{tab:relators}
	\begin{center}
		\begin{tabular}{lr}
\toprule
 $s_i^m$ 			
 & $1 \leq i \leq n-1$ \\
 
 $[s_i, s_j]$  	
 & $1 \leq i < j-1 \leq n-2$ \\

 $[s_i, s_{i+1}, s_i] $
 & $1 \leq i \leq n-2$ \\

 $[s_i, s_{i+1}, s_{i+1}] $
 & $1 \leq i \leq n-2$ \\

 $[[s_i, s_{i+1}], [s_{i+1}, s_{i+2}]]$ 
 & $1 \leq i \leq n-3$  \\ \bottomrule
		\end{tabular}
	\end{center}
\end{table}

Let $F$ be the free group on $\mathcal S$ and $R$ the normal subgroup of $F$ generated by $\mathcal R$, so that $F/R$ is a free presentation of the group $\UT_n(\ZZ/m\ZZ)$. Its Schur multiplier is then given by the formula $(R \cap [F,F])/[R,F]$. We first restrict the orders of some special elements of this group, gathered in Table \ref{tab:1}.

\begin{table}[htb]
\caption{Generators of $\MM(\UT_n(\ZZ/m\ZZ))$ and their orders.}
\label{tab:1}
	\begin{center}
		\begin{tabular}{clr}
\toprule
$(1)$	& $[s_i, s_j]$
	& $m$ \\ 

$(2)$	& $[s_i, s_{i+1}, s_i]$
	& $m$ \\

	& $[s_i, s_{i+1}, s_i s_{i+1}^{-1}]$
	& $\gcd(m, \binom{m}{2})$ \\ 

$(3)$	& $[[s_i, s_{i+1}], [s_{i+1}, s_{i+2}]]$
	& $\gcd(2,m)$ \\ \bottomrule
		\end{tabular}
	\end{center}
\end{table}

\begin{lemma} \label{lem:orders}
 In the group $F/[R,F]$, elements of Table \ref{tab:1} have orders dividing the numbers in the far right column.
\end{lemma}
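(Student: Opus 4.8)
The plan is to exploit the single structural fact that, since $[R,F]$ contains all commutators of $R$ with $F$, the image of $R$ in $F/[R,F]$ is central; moreover $[F,R]=[R,F]$, so $[\rho,g]\in[R,F]$ for every $\rho\in R$ and $g\in F$. Every element listed in Table \ref{tab:1} lies in $R$: the entries $[s_i,s_j]$, $[s_i,s_{i+1},s_i]$ and $[[s_i,s_{i+1}],[s_{i+1},s_{i+2}]]$ are relators, while for the mixed entry I would expand, writing $u=[s_i,s_{i+1}]$, $p_i=[s_i,s_{i+1},s_i]$ and $q_i=[s_i,s_{i+1},s_{i+1}]$,
\[
 [s_i,s_{i+1},s_is_{i+1}^{-1}]=[u,s_is_{i+1}^{-1}]=[u,s_{i+1}^{-1}]\,[u,s_i]^{s_{i+1}^{-1}}\equiv q_i^{-1}p_i \pmod{[R,F]},
\]
using that $p_i,q_i\in R$ are central, so conjugation acts trivially and inverses pass through. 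Thus all four elements become central in $F/[R,F]$, and to bound an order it suffices to realise the relevant power as a commutator $[\rho,g]$ with $\rho\in R$.

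The orders dividing $m$ all come from the power relators $s_i^m$. For entry $(1)$, since $[s_i,s_j]$ is central, repeated use of $[xy,z]=[x,z]^y[y,z]$ collapses $[s_i^m,s_j]$ to $[s_i,s_j]^m$; as $s_i^m\in R$ the left side lies in $[R,F]$, so $[s_i,s_j]^m\in[R,F]$. For entry $(2)$ the same collapse applied to $[[s_i,s_{i+1}],s_i^m]\in[F,R]=[R,F]$ gives $p_i^m\in[R,F]$, and the symmetric computation with $s_{i+1}^m$ gives $q_i^m\in[R,F]$; hence $q_i^{-1}p_i$ also has order dividing $m$. The same device handles the $m$-part of entry $(3)$ once one knows $u^m\in R$, which the collection identity below supplies.

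For the refinement to $\gcd(m,\binom{m}{2})$ in entry $(2)$ I would collect two conjugates of a power relator. Because $[u,s_i]=p_i$ is central, the class-two collection formula $(xy)^m=x^my^m[y,x]^{\binom{m}{2}}$ gives $(s_iu)^m\equiv s_i^m u^m p_i^{\binom{m}{2}}$; but $s_{i+1}^{-1}s_is_{i+1}=s_iu$, so $(s_iu)^m=s_{i+1}^{-1}s_i^m s_{i+1}\equiv s_i^m$ modulo $[R,F]$, whence $u^m\equiv p_i^{-\binom{m}{2}}$. Conjugating $s_{i+1}^m$ by $s_i$ instead, and using $s_i^{-1}s_{i+1}s_i=s_{i+1}u^{-1}$ together with $[u,s_{i+1}]=q_i$, gives in the same way $u^m\equiv q_i^{-\binom{m}{2}}$. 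Comparing the two expressions for $u^m$ yields $(q_i^{-1}p_i)^{\binom{m}{2}}\equiv 1$, so $q_i^{-1}p_i$ has order dividing $\binom{m}{2}$; together with the previous paragraph its order divides $\gcd(m,\binom{m}{2})$. This same identity shows $u^m\in R$, closing the gap left above.

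The genuinely delicate point is the factor $\gcd(2,m)$ for entry $(3)$. Writing $u=[s_i,s_{i+1}]$ and $e=[s_{i+1},s_{i+2}]$, one has $\mathrm{ord}\,[u,e]\mid m$ by the device of paragraph two, since $u^m\in R$ forces $[u^m,e]\equiv[u,e]^m\in[R,F]$. To obtain the characteristic-free bound $\mathrm{ord}\,[u,e]\mid 2$ I would invoke the Hall--Witt identity on the triple $s_i,s_{i+1},s_{i+2}$. Its cyclic term built on the non-adjacent pair $\{s_i,s_{i+2}\}$ vanishes modulo $[R,F]$, because $[s_i,s_{i+2}]\in R$ makes that term a commutator of a central element with $F$. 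The two surviving terms must then be reduced modulo $[R,F]$, and here the main obstacle appears: the reduction forces in the auxiliary weight-three commutator $f=[s_i,s_{i+1},s_{i+2}]$, which is not a relator and hence not central, and one must track it through the identities $[u,s_{i+2}]=f$ and $[s_i,e]\equiv f^{-1}$ (valid modulo the central commutator $[s_i,s_{i+2}]$) so that its two occurrences cancel while two copies of $[u,e]$ survive. Carrying this bookkeeping through should collapse Hall--Witt to $[u,e]^2\in[R,F]$; combined with $\mathrm{ord}\,[u,e]\mid m$ this gives $\mathrm{ord}\,[u,e]\mid\gcd(2,m)$. I expect this last cancellation --- controlling the non-central commutator $f$ and verifying that no residual weight-four correction survives --- to be the crux of the lemma.
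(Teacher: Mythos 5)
Your treatment of entries $(1)$ and $(2)$ is correct and complete: the centrality of relators in $F/[R,F]$, the collapse $[s_i^m,s_j]\equiv[s_i,s_j]^m$, and your two collection identities $u^m\equiv p_i^{-\binom{m}{2}}$ and $u^m\equiv q_i^{-\binom{m}{2}}$ (equivalent to the paper's expansions of $[s_i^m,s_{i+1}]$ and $[s_i,s_{i+1}^m]$) legitimately yield the bounds $m$ and $\gcd(m,\binom{m}{2})$; your explicit proof that the order of $[u,e]$ divides $m$ is also fine, and is in fact spelled out more carefully than in the paper. The problem is entry $(3)$: the bound $[u,e]^2\in[R,F]$ --- the only genuinely hard statement in the lemma, and the one that produces the $(\ZZ/2\ZZ)^{n-3}$ summand for even $m$ --- is never proved. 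You say the Hall--Witt terms ``should collapse'' and that you ``expect'' the cancellation to be the crux; that is an acknowledged gap placed exactly where the lemma's content lies.

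Moreover, the route you sketch does not deliver. Applying Hall--Witt to the triple $(s_i,s_{i+1},s_{i+2})$ and discarding the term containing $[s_{i+2},s_i^{-1}]\in R$, the two surviving terms reduce modulo $[R,F]$ to $[u,e]\cdot(f^{-1})^{u^{-1}e^{-1}}\cdot(g^{-1})^{e^{-1}}$, where $g=[s_{i+1},s_{i+2},s_i]$; since $gf$ and $[f,u^{-1}]$ lie in $R$ (both evaluate to the identity matrix), this gives $[u,e]\equiv gf\,[f,u^{-1}]\pmod{[R,F]}$. Only \emph{one} copy of $[u,e]$ ever appears; the two occurrences of $f$ do not cancel against each other, and the resulting relation merely expresses $[u,e]$ as a product of central elements of $R$, which is consistent with $[u,e]$ having any order. (Your auxiliary identity $[s_i,e]\equiv f^{-1}$ also has the wrong sign --- modulo $R$ one has $[s_i,e]\equiv f$ --- and any bookkeeping ``up to weight-four corrections'' is useless here, because $[u,e]$ itself has weight four.) What the paper actually does is produce two independent exact expressions for the same element: Hall--Witt applied to the triple $([s_{i+2},s_{i+1}],\,s_{i+1},\,s_i)$ --- note the first entry is a commutator, not a generator --- gives $[u,e]=[s_{i+2},s_{i+1},s_i,s_{i+1}]$, while a separate direct conjugation computation gives $[u,e]=[s_i,s_{i+1},s_{i+2},s_{i+1}]^{-1}$. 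Both right-hand sides are commutators with last entry $s_{i+1}$, so multiplying them yields $[u,e]^2=[[s_i,s_{i+1},s_{i+2}]^{-1}[s_{i+2},s_{i+1},s_i],\,s_{i+1}]$, and the decisive fact --- absent from your sketch --- is that $[s_i,s_{i+1},s_{i+2}]^{-1}[s_{i+2},s_{i+1},s_i]$ lies in $R$ (both factors evaluate to $I+E_{i,i+3}$ in $\UT_n(\ZZ/m\ZZ)$), hence is central modulo $[R,F]$, so this commutator lies in $[R,F]$. Supplying this pair of identities and the membership in $R$ is what your proof is missing.
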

\begin{proof}
 All calculations are done in the group $F/[R,F]$. As the given relators are central, we have $[s_i, s_j]^m = [s_i^m, s_j] = 1$ and likewise $[s_i, s_{i+1}, s_i]^m = 1$. Note that $[s_i, s_{i+1}]^m [s_i, s_{i+1}, s_i]^{\binom{m}{2}} = [s_i, s_{i+1}^m] = 1$ and $[s_i, s_{i+1}]^m [s_i, s_{i+1}, s_{i+1}]^{\binom{m}{2}} = 1$. The commutator $[s_i, s_{i+1}, s_i s_{i+1}^{-1}]$ is thus of order dividing $\gcd(m, \binom{m}{2})$, which equals $m$ for odd $m$ and $m/2$ for even $m$. This takes care of $(1)$ and $(2)$. For $(3)$, use the Hall-Witt identity \cite{Kar87} on the elements $[s_{i+2}, s_{i+1}]$, $s_{i+1}$ and $s_i$ to obtain
\[
[s_{i+2}, s_{i+1}, s_{i+1}, s_i]^{s_{i+1}^{-1}} \cdot [s_{i+1}^{-1}, s_i^{-1}, [s_{i+2}, s_{i+1}]]^{s_i} \cdot [s_i, [s_{i+1}, s_{i+2}], s_{i+1}^{-1}]^{[s_{i+3}, s_{i+2}]} = 1.
\]
The first commutator is trivial, whereas the second and third one can be simplified as
\[
 [s_{i+1}^{-1}, s_i^{-1}, [s_{i+2}, s_{i+1}]]^{s_i}  = [[s_{i+1}, s_i]^{[s_i, s_{i+1}] \cdot s_{i+1}^{-1}}, [s_{i+2}, s_{i+1}]^{s_i}] 
= [[s_{i+1}, s_i], [s_{i+2}, s_{i+1}]]
\]
and
\begin{align*}
& [s_i, [s_{i+1}, s_{i+2}], s_{i+1}^{-1}]^{[s_{i+2}, s_{i+1}]} \\
&\quad = [[s_i, [s_{i+2}, s_{i+1}]]^{- [s_{i+1}, s_{i+2}]}, s_{i+1}^{-1}]^{[s_{i+2}, s_{i+1}]} \\
&\quad = [s_{i+2}, s_{i+1}, s_i, s_{i+1}^{-[s_{i+2}, s_{i+1}]}] \\
&\quad = [s_{i+2}, s_{i+1}, s_i, s_{i+1}]^{-1},
\end{align*}
hence $[[s_i, s_{i+1}], [s_{i+1}, s_{i+2}]] \cdot [s_{i+2}, s_{i+1}, s_i, s_{i+1}]^{-1} = 1$. On the other hand, the relator in question may be expressed as
\begin{align*}
& [[s_i, s_{i+1}], [s_{i+1}, s_{i+2}]] \\
&\quad = [s_i, s_{i+1}, s_{i+1}^{-1} s_{i+1}^{s_{i+2}}]  \\
&\quad = [s_i, s_{i+1}, s_{i+1}^{s_{i+2}}] \cdot [s_i, s_{i+1}, s_{i+1}^{-1}]^{s_{i+1} [s_{i+1}, s_{i+2}]}  \\
&\quad = [[s_i, s_{i+1}]^{s_{i+2}^{-1}}, s_{i+1}]^{s_{i+2}} \cdot  [s_i, s_{i+1}, s_{i+1}]^{-1} \\
&\quad = [[s_i, s_{i+1}] \cdot [s_i, s_{i+1}, s_{i+2}]^{- s_{i+2}^{-1}},  s_{i+1}]^{s_{i+2}} \cdot [s_i, s_{i+1}, s_{i+1}]^{-1} \\
&\quad = [[s_i, s_{i+1}, s_{i+2}]^{- s_{i+2}^{-1}}, s_{i+1}]^{s_{i+2}} \\
&\quad = [s_i, s_{i+1}, s_{i+2}, s_{i+1}]^{- [s_{i+2}, [s_i, s_{i+1}]] s_{i+2}} \\
&\quad = [s_i, s_{i+1}, s_{i+2}, s_{i+1}]^{-1}.
\end{align*}
Combining the two, we get
\[
  [[s_i, s_{i+1}], [s_{i+1}, s_{i+2}]]^2 = [[s_i, s_{i+1}, s_{i+2}]^{-1} \cdot [s_{i+2}, s_{i+1}, s_i], s_{i+1}] = 1,
\]
where the last equality comes from the fact that the left commutant of the last commutator is an element of $R$. This can be checked in the group $F/R = \UT_n(\ZZ/m\ZZ)$ itself by identifying $s_i$ with $I + E_{i,i+1}$. The proof is complete.
\end{proof}

Next, we show that these elements form a generating set of the multiplier.

\begin{lemma} \label{lem:expansion}
 Relators of Table \ref{tab:1} generate the group $(R \cap [F,F])/[R,F]$.
\end{lemma}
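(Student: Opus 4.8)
The plan is to realize the multiplier as the kernel of a natural map out of $R/[R,F]$ and then show that the power relators cannot survive in that kernel. The starting observation is that $R$ is the normal closure in $F$ of the relator set $\mathcal R$, so $R$ is generated as a subgroup by the conjugates $r^f$ with $r \in \mathcal R$ and $f \in F$. Modulo $[R,F]$ one has $r^f = r[r,f] \equiv r$, and since $[R,R] \subseteq [R,F]$ the quotient $R/[R,F]$ is abelian. Hence $R/[R,F]$ is generated by the images of the finitely many relators of Table \ref{tab:relators}: write $a_i$ for the image of the power relator $s_i^m$, and keep the images of the four commutator families $[s_i,s_j]$ (for $j>i+1$), $[s_i,s_{i+1},s_i]$, $[s_i,s_{i+1},s_{i+1}]$ and $[[s_i,s_{i+1}],[s_{i+1},s_{i+2}]]$.

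Next I would consider the homomorphism $\pi \colon R/[R,F] \to F/[F,F] \cong \ZZ^{n-1}$ induced by the inclusion $R \hookrightarrow F$. By Hopf's formula its kernel is exactly $(R \cap [F,F])/[R,F]$, the group we must generate. Under $\pi$ each power relator maps as $a_i \mapsto m\epsilon_i$, where $\epsilon_i$ is the $i$-th standard basis vector, while every commutator relator lies in $[F,F]$ and therefore maps to $0$.

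The decisive step is then purely arithmetic. Any $g \in R/[R,F]$ can be written, using commutativity, as $g = \prod_i a_i^{x_i} \cdot h$ with $h$ a word in the commutator relators, whence $\pi(g) = \sum_i x_i m \epsilon_i$. Because $\ZZ^{n-1}$ is torsion-free, $g \in \ker \pi$ forces $m x_i = 0$ and so $x_i = 0$ for every $i$, leaving $g = h$. Thus the four commutator families generate $(R \cap [F,F])/[R,F]$ and the power relators $s_i^m$ contribute nothing. This torsion-freeness of $F^{ab}$ is the one genuinely load-bearing point; everything else is bookkeeping.

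Finally I would reconcile this with the reorganized list of Table \ref{tab:1}, whose only discrepancy is that it records $[s_i,s_{i+1},s_i s_{i+1}^{-1}]$ in place of $[s_i,s_{i+1},s_{i+1}]$. Expanding in $F/[R,F]$, where the relators $[s_i,s_{i+1},s_i]$ and $[s_i,s_{i+1},s_{i+1}]$ are central, gives $[s_i,s_{i+1},s_i s_{i+1}^{-1}] \equiv [s_i,s_{i+1},s_i]\,[s_i,s_{i+1},s_{i+1}]^{-1}$, so the two collections span the same subgroup and the substitution is harmless. I do not expect a real obstacle in this lemma; the genuine work — establishing independence and pinning down the exact orders using Lemma \ref{lem:orders} — belongs to the subsequent stage, not to this generation statement.
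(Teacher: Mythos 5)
Your proof is correct and follows essentially the same route as the paper: modulo $[R,F]$ conjugates of relators collapse to the relators themselves, the power relators $s_i^m$ are eliminated because their images $m\epsilon_i$ in the torsion-free group $F/[F,F]$ must vanish for an element of $R \cap [F,F]$, and the substitution $[s_i,s_{i+1},s_i s_{i+1}^{-1}] \equiv [s_i,s_{i+1},s_i][s_i,s_{i+1},s_{i+1}]^{-1} \pmod{[R,F]}$ reconciles the relator list with Table \ref{tab:1}. Your explicit use of the homomorphism $R/[R,F] \to F/[F,F]$ is just a cleaner packaging of the paper's observation that $w \in [F,F]$ forces all exponents $e_i$ to be zero.
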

\begin{proof}
 Let $w \in R \cap [F,F]$. Expand it as a product of conjugates of elements of $\mathcal R$. Using the previous lemma, we may assume $w$ can be written as a product of relators of Table \ref{tab:1} multiplied by $\prod_i s_i^{m e_i}$ for some integers $e_i$. As $w$ is also contained in $[F,F]$, all $e_i$ must indeed be zero. 
\end{proof}

We now prove that these relators are also independent modulo $[R,F]$.

\begin{lemma} \label{lem:unique}
 Whenever a product of relators of Table \ref{tab:1} is an element of $[R, F]$, the corresponding exponents are divisible by the numbers in the far right column of Table \ref{tab:1}.
\end{lemma}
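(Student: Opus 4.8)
The strategy is to detect each generator of Table \ref{tab:1} separately by mapping $F/[R,F]$ into suitable nilpotent groups. For a given generator $g$ I will construct a homomorphism $\phi\colon F\to G$ under which every relator of $\mathcal R$ lands in the centre of $G$; such a $\phi$ automatically kills $[R,F]$ and therefore factors through $F/[R,F]$. If moreover $\phi(g)$ has order exactly the number in the far right column while the remaining generators of Table \ref{tab:1} lie in $\ker\phi$, then applying $\phi$ to a hypothetical relation $\prod_g g^{e_g}\in[R,F]$ forces $e_g$ to be divisible by that number. Since the generators are local, involving at most three consecutive $s_k$, I will throughout send every $s_k$ outside the relevant window to $1$; this turns each relator supported outside the window into a trivial word, leaving only finitely many indices and reducing all verifications to computations on at most three consecutive generators.

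For the commutators $[s_i,s_j]$ in row $(1)$ I take $G$ to be the Heisenberg group of order $m^3$ over $\ZZ/m\ZZ$ and set $s_i\mapsto a$, $s_j\mapsto b$, and $s_k\mapsto 1$ otherwise. As $|i-j|\geq 2$, the only relator not mapping to $1$ is $[s_i,s_j]\mapsto[a,b]$, which is central of order $m$; hence $\phi$ descends to $F/[R,F]$, the image of $[s_i,s_j]$ has order $m$, and every other generator of Table \ref{tab:1} dies. Running over all admissible pairs isolates the row-$(1)$ generators from one another and from rows $(2)$ and $(3)$, giving $m \mid e_g$ for each of them.

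For rows $(2)$ and $(3)$ I localise to the block on $s_i,s_{i+1}$, respectively $s_i,s_{i+1},s_{i+2}$, and use explicit central extensions of $\UT_3(\ZZ/m\ZZ)$ and $\UT_4(\ZZ/m\ZZ)$; note that the relevant commutators already vanish in the unitriangular group itself --- that is precisely why they occur as relators --- so genuine covers are needed. For the $\UT_3$-block I build a two-generator group of class $3$ in which the images of $[s_i,s_{i+1},s_i]$ and $[s_i,s_{i+1},s_is_{i+1}^{-1}]$ generate a central subgroup isomorphic to $\ZZ/m\ZZ\oplus\ZZ/\gcd(m,\binom{m}{2})\ZZ$. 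Such a detector involves only two indices, hence annihilates every row-$(1)$ and row-$(3)$ generator together with the row-$(2)$ generators of all other blocks, and so pins down both row-$(2)$ exponents of its block simultaneously. The row-$(3)$ generator is then treated by a class-$4$ cover of the $\UT_4$-block carrying a central element of order $\gcd(2,m)$.

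These three families are combined in a single descending pass. The Heisenberg maps first force $m \mid e_g$ for each row-$(1)$ generator, and the $\UT_3$-covers next force the divisibilities for row $(2)$. When a $\UT_4$-cover is finally applied it also sees the row-$(1)$ and row-$(2)$ generators of its window, but by Lemma \ref{lem:orders} each of these has order dividing the number already shown to divide its exponent, so all of their contributions vanish and the image collapses to a power of the row-$(3)$ element, yielding $\gcd(2,m)\mid e_g$; this completes the triangular elimination. The main obstacle is the construction of the $\UT_3$- and $\UT_4$-covers realising exactly the advertised orders for even $m$: producing $[s_i,s_{i+1},s_is_{i+1}^{-1}]$ of order precisely $m/2$ and $[[s_i,s_{i+1}],[s_{i+1},s_{i+2}]]$ of order precisely $2$ demands cocycles that are nontrivial yet compatible with the $\binom{m}{2}$-twisted power relation $[s_i,s_{i+1}]^m[s_i,s_{i+1},s_i]^{\binom{m}{2}}=1$ exploited in Lemma \ref{lem:orders}. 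The order-$2$ class-$4$ element is the most delicate, being easy to collapse inadvertently, and I expect to verify these extensions by an explicit faithful matrix or polycyclic presentation together with a direct order computation rather than by any conceptual shortcut.
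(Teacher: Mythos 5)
Your overall strategy is sound and genuinely different from the paper's: the paper works inside the free group modulo terms of its lower central series, expanding a hypothetical element of $[R,F]$ against the basis of basic commutators in $\gamma_2(F)/\gamma_3(F)$ and $\gamma_3(F)/\gamma_4(F)$, and then disposes of the weight-four relators by induction on $n$; you instead detect each generator by homomorphisms onto explicit central extensions. Your rows $(1)$ and $(2)$ can indeed be completed along your lines. The Heisenberg detectors are correct as stated. The class-$3$ detector you postulate for row $(2)$ also exists and is cheap to build: take the free class-$3$ group on $a,b$ and factor out the normal closure of $[a,b]^m[a,b,a]^{\binom{m}{2}}$ and $[a,b,ab^{-1}]^{\gcd(m,\binom{m}{2})}$; this normal closure meets $\gamma_3$ exactly in $\langle [a,b,a]^m,[a,b,b]^m,[a,b,ab^{-1}]^{\gcd(m,\binom{m}{2})}\rangle$, so $[a,b,a]$ and $[a,b,ab^{-1}]$ generate $\ZZ/m\ZZ\oplus\ZZ/\gcd(m,\binom{m}{2})\ZZ$ in the quotient, while $a^m$ and $b^m$ become central there, so every relator of $\mathcal R$ lands in the centre. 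Your triangular elimination is also legitimate: each detector factors through $F/[R,F]$, so Lemma \ref{lem:orders} bounds the orders of the images and the previously established divisibilities really do annihilate the lower-row contributions.

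The genuine gap is row $(3)$. There you assert, but do not construct, a class-$4$ central extension of the $\UT_4$-block in which $[[a,b],[b,c]]$ survives with order exactly $2$. This is not a routine verification that can be deferred: the existence of such an extension is equivalent to $[[s_1,s_2],[s_2,s_3]]\notin[R,F]$, i.e.\ to $\MM(\UT_4(\ZZ/2\ZZ))$ being $(\ZZ/2\ZZ)^4$ rather than $(\ZZ/2\ZZ)^3$, which is precisely the hardest content of the lemma and the only place where an independence statement cannot be read off from free nilpotent quotients (modulo $\gamma_5(F)$ the element $[[s_i,s_{i+1}],[s_{i+1},s_{i+2}]]$ is a product of basic commutators that genuinely mix with images of $[R,F]$, which is why both you and the paper must leave the ``universal'' setting at this point). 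The paper does not produce the required object by hand either: it verifies $\MM(\UT_4(\ZZ/2\ZZ))\cong(\ZZ/2\ZZ)^4$ by Nickel's algorithm in {\sf GAP} and then reduces all $n\geq 4$ and all even $m$ to this one computation via induction on $n$ and the projections $\UT_n(\ZZ/m\ZZ)\to\UT_4(\ZZ/2\ZZ)$. The same economy is available to you --- a single verified cover of $\UT_4(\ZZ/2\ZZ)$ (of order $2^{10}$, or its quotient of order $2^7$ in which the rest of the multiplier is collapsed) detects row $(3)$ for every even $m$, since for even $m$ all relators of $\mathcal R$ already map to central elements of it --- but until that one finite $2$-group is exhibited with a consistency-checked polycyclic presentation or a machine computation, the step you yourself flag as the main obstacle is exactly the step that carries the theorem, and your argument is incomplete there.
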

\begin{proof}
Let $w$ be the product
\[
\prod_{i < j-1} [s_i, s_j]^{a_{ij}} \cdot \prod_{i} [s_i, s_{i+1}, s_i]^{b_i} \cdot \prod_{i} [s_i, s_{i+1}, s_i s_{i+1}^{-1}]^{c_i} \cdot \prod_{i} [[s_i, s_{i+1}], [s_{i+1}, s_{i+2}]]^{d_i}
\]
for some nonnegative integers $a_{ij}, b_i < m$, $c_i < \gcd(\binom{m}{2}, m)$ and $d_i < \gcd(2,m)$, and assume that it is an element of $[R,F]$. Modulo $\gamma_3(F)$, it equals $\prod_{i < j-1} [s_i, s_j]^{a_{ij}}$. As $w$ is contained in $[R,F]$, it can also be written as a product of conjugates of commutators, the first commutant of which is a relator. This amounts to $\prod_{i,j} [s_i^m, s_j]^{\pm 1} \equiv \prod_{i<j} [s_i, s_j]^{\pm m}$ modulo $\gamma_3(F)$. Basic commutators of weight $2$ form a basis of the free abelian group $\gamma_2(F)/\gamma_3(F)$, so each $a_{ij}$ is divisible by $m$, hence zero by the restriction $a_{ij} < m$. Modulo $\gamma_4(F)$, the product $w$ now equals $\prod_{i} [s_i, s_{i+1}, s_i]^{b_i} \cdot \prod_{i} [s_i, s_{i+1}, s_i s_{i+1}^{-1}]^{c_i}$. As before, any element of $[R,F]$ can be expressed as
\[
 \prod_{i,j} [s_i^m, s_j]^{\pm *} \cdot \prod_{i < j-1, k} [s_i, s_j, s_k]^{\pm 1}
\]
modulo $\gamma_4(F)$, where the $*$ symbolizes any element of $F$. Everything except commutators of weight three in which two consecutive generators appear must cancel out, so $w$ equals
\[
 \prod_{i} [s_i, s_{i+1}]^{\pm m*} [s_i, s_{i+1}, s_i]^{\pm \binom{m}{2}} \cdot \prod_{i} [s_{i+1}, s_i]^{\pm m*} [s_i, s_{i+1}, s_{i+1}]^{\mp \binom{m}{2}}, 
\]
modulo $[R,F]$, the first part of which derives from commutators $[s_i^m, s_{i+1}] = [s_i, s_{i+1}]^m \cdot [s_i, s_{i+1}, s_i]^{\binom{m}{2}}$ and the second one analogously from $[s_{i+1}^m, s_i] = [s_{i+1}, s_i]^m \cdot [s_{i+1}, s_i, s_{i+1}]^{\binom{m}{2}} = [s_{i+1}, s_i]^m \cdot [s_i, s_{i+1}, s_{i+1}]^{-\binom{m}{2}}$. Canceling out the commutators of weight $2$, we are left with
\[
 \prod_i [s_i, s_{i+1}, s_i]^{m \alpha_i} [s_i, s_{i+1}, s_{i+1}]^{m \beta_i} [s_i, s_{i+1}, s_i s_{i+1}^{-1}]^{\binom{m}{2} \gamma_i}
\]
for some integers $\alpha_i, \beta_i, \gamma_i$. Comparing the basis expansion in $\gamma_3(F)/\gamma_4(F)$, we obtain $b_i + c_i = m \alpha_i - \binom{m}{2}\gamma_i$ and $c_i = -m \beta_i + \binom{m}{2}\gamma_i$. It follows that $\gcd(m, \binom{m}{2})$ divides the $c_i$, so the restriction on the $c_i$ implies they must all be zero. Summing the two equations, we conclude that $m$ divides the $b_i$, so they too must all be zero. What now remains of the product $w$ is only $\prod_i [[s_i, s_{i+1}], [s_{i+1}, s_{i+2}]]^{d_i}$. When $m$ is odd, the relator $[[s_i, s_{i+1}], [s_{i+1}, s_{i+2}]]$ is trivial in the group $F/[R,F]$. We are therefore left with the even case, and the proof of this in done by induction on $n$. It is easy to see that we have $\MM(\UT_4(\ZZ/2\ZZ)) \cong (\ZZ/2\ZZ)^4$, e.g. use \cite{Nickel93, GAP}. Now let $n > 4$ and assume, for the sake of contradiction, that not all of the $d_i$ are zero. If $d_1 = 0$, then by inductive hypothesis, all the remaining $d_i$'s are also zero; therefore $d_1  = 1$. The canonical epimorphism $\UT_n(\ZZ/m\ZZ) \to \UT_{n-1} (\ZZ/m\ZZ)$ may be composed into $\UT_n( \ZZ/m\ZZ) \to \UT_4(\ZZ/m\ZZ)$ and prolonged with the natural homomorphism $\UT_4(\ZZ/m\ZZ) \to \UT_4(\ZZ/2\ZZ)$, all-together inducing a homomorphism of multipliers $\MM(\UT_n(\ZZ/m\ZZ)) \to \MM(\UT_4(\ZZ/2\ZZ))$. The commutator $[[s_1, s_2], [s_2, s_3]]$ is thus trivial in the multiplier of $\UT_4(\ZZ/2\ZZ)$. By what we have shown so far, we should therefore have $\MM(\UT_4(\ZZ/2\ZZ)) \cong (\ZZ/2\ZZ)^3$, a contradiction.
\end{proof}

Lemmas \ref{lem:expansion} and \ref{lem:unique} combined prove our theorem.
 
\begin{theorem} \label{thm:main}
 The Schur multiplier of $\UT_n(\ZZ/m\ZZ)$ is isomorphic to
	\[
		\textstyle (\ZZ/m\ZZ)^{\binom{n}{2} - 1} \text{ for odd $m$,} \qquad (\ZZ/2\ZZ)^{n-3} \oplus (\ZZ/\frac{m}{2}\ZZ)^{n-2} \oplus (\ZZ/m\ZZ)^{\binom{n-1}{2}}  \text{ for even $m$.}
	\]
\end{theorem}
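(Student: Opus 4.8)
The plan is to read the multiplier straight off Hopf's formula $\MM(\UT_n(\ZZ/m\ZZ)) = (R \cap [F,F])/[R,F]$ together with the three preceding lemmas, which between them pin down a generating set, upper bounds for the orders of its elements, and matching lower bounds. Since the multiplier is abelian, combining them will exhibit it as a direct sum of cyclic groups indexed by the relators of Table \ref{tab:1}, after which only a bookkeeping computation of multiplicities and a simplification of the $\gcd$'s remains.

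First I would assemble the structural statement. Lemma \ref{lem:expansion} guarantees that the images of the relators of Table \ref{tab:1} generate $(R \cap [F,F])/[R,F]$, so the multiplier is a quotient of the direct sum of the cyclic groups they generate. Lemma \ref{lem:orders} bounds the order of each generator by the corresponding entry in the right-hand column of Table \ref{tab:1}. Finally, Lemma \ref{lem:unique} asserts that any relation among these generators forces each exponent to be divisible by that same entry; this simultaneously supplies the matching lower bounds on the orders and the independence of the generators. Reading the three together, I conclude that the multiplier splits as the direct sum, over the relators of Table \ref{tab:1}, of cyclic groups whose orders equal exactly the right-hand column.

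It then remains to count. The generators fall into four families: the commutators $[s_i, s_j]$ with $1 \leq i < j-1 \leq n-2$, of which there are $\binom{n-1}{2} - (n-2) = \binom{n-2}{2}$, each of order $m$; the $n-2$ commutators $[s_i, s_{i+1}, s_i]$ of order $m$; the $n-2$ commutators $[s_i, s_{i+1}, s_i s_{i+1}^{-1}]$ of order $\gcd(m, \binom{m}{2})$; and the $n-3$ commutators $[[s_i, s_{i+1}], [s_{i+1}, s_{i+2}]]$ of order $\gcd(2, m)$. I would then invoke the evaluations recorded in Lemma \ref{lem:orders}, namely $\gcd(m, \binom{m}{2}) = m$ for odd $m$ and $= m/2$ for even $m$, together with $\gcd(2,m) = 1$ or $2$ according to the parity of $m$.

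For odd $m$ the last family is trivial while the first three all contribute $\ZZ/m\ZZ$-summands, giving $\binom{n-2}{2} + (n-2) + (n-2) = \frac{(n-2)(n+1)}{2} = \binom{n}{2} - 1$ copies of $\ZZ/m\ZZ$. For even $m$ the $\ZZ/m\ZZ$-summands number $\binom{n-2}{2} + (n-2) = \binom{n-1}{2}$, the $\ZZ/\frac{m}{2}\ZZ$-summands number $n-2$, and the $\ZZ/2\ZZ$-summands number $n-3$, which is exactly the claimed decomposition. Since the genuine work — in particular the delicate even-case induction underlying Lemma \ref{lem:unique} — has already been done, the only remaining pitfall is a miscount or an arithmetic slip; verifying the two binomial identities $\binom{n-2}{2} + 2(n-2) = \binom{n}{2} - 1$ and $\binom{n-2}{2} + (n-2) = \binom{n-1}{2}$ is the one place where I would take care.
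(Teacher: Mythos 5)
Your proposal is correct and takes essentially the same route as the paper: the paper proves Theorem \ref{thm:main} precisely by combining Lemmas \ref{lem:orders}, \ref{lem:expansion}, and \ref{lem:unique} to identify the multiplier as the direct sum of cyclic groups indexed by Table \ref{tab:1}, leaving the multiplicity count implicit. Your explicit bookkeeping, including the identities $\binom{n-2}{2}+2(n-2)=\binom{n}{2}-1$ and $\binom{n-2}{2}+(n-2)=\binom{n-1}{2}$ and the evaluation of $\gcd\bigl(m,\binom{m}{2}\bigr)$, is exactly the omitted verification and is carried out correctly.
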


The methods above can also be applied to the group of unitriangular matrices over the ring $\ZZ$. Its presentation $F/R$ is just as that of the group $\UT_n(\ZZ/m\ZZ)$, only the relators $s_i^m = 1$ should be removed. Lemma \ref{lem:orders} is transcribed into the fact that the relators $[[s_i, s_{i+1}], [s_{i+1}, s_{i+2}]]$ are of order dividing $2$ in the multiplier, and Lemma \ref{lem:expansion} gives us a canonical expansion of a given element in $R \cap [F,F]$, where only the exponents of $[[s_i, s_{i+1}], [s_{i+1}, s_{i+2}]]$ are restricted. The uniqueness lemma goes as follows.

\begin{lemma} \label{lem:unique2}
Suppose that for integers $a_{ij}, b_i, c_i$ and nonnegative integers $d_i < 2$, the product
 \[
    \prod_{i < j-1} [s_i, s_j]^{a_{ij}} \cdot \prod_{i} [s_i, s_{i+1}, s_i]^{b_i} \cdot \prod_{i} [s_i, s_{i+1}, s_i s_{i+1}^{-1}]^{c_i} \cdot \prod_{i} [[s_i, s_{i+1}], [s_{i+1}, s_{i+2}]]^{d_i}
 \]
is contained in $[R,F]$. Then all $a_{ij}$, $b_i$, $c_i$ and $d_i$ are zero.
\end{lemma}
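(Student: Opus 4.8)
The plan is to follow the filtration argument of Lemma~\ref{lem:unique}, working through the lower central series $\gamma_k(F)$, but to exploit a simplification peculiar to the integral case: since the presentation of $\UT_n(\ZZ)$ has no relator of weight one (the powers $s_i^m$ are absent), every relator lies in $\gamma_2(F)$, whence $[R,F] \subseteq \gamma_3(F)$. I will peel off the three blocks of exponents $a_{ij}$, then $b_i, c_i$, then $d_i$ in turn, reading each block off in the appropriate quotient $\gamma_k(F)/\gamma_{k+1}(F)$, which is free abelian on the basic commutators of weight $k$.

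First I would treat the $a_{ij}$. Working modulo $\gamma_3(F)$, the word $w$ collapses to $\prod_{i<j-1}[s_i,s_j]^{a_{ij}}$, while the hypothesis $w \in [R,F] \subseteq \gamma_3(F)$ forces this product to be trivial in $\gamma_2(F)/\gamma_3(F)$. As the commutators $[s_i,s_j]$ with $j-i\ge 2$ are distinct basic commutators, independence gives $a_{ij}=0$ at once; this is where the integral case is genuinely cleaner than Lemma~\ref{lem:unique}, since there is no $[s_i^m,s_j]$ term muddying the weight-two layer. Next, modulo $\gamma_4(F)$ the weight-four factors vanish and, using $[s_i,s_{i+1},s_is_{i+1}^{-1}]\equiv[s_i,s_{i+1},s_i]-[s_i,s_{i+1},s_{i+1}]$ written additively in $\gamma_3(F)/\gamma_4(F)$, the image of $w$ becomes an explicit integer combination of the basic commutators $[s_{i+1},s_i,s_i]$ and $[s_{i+1},s_i,s_{i+1}]$ with coefficients $-(b_i+c_i)$ and $c_i$. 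The key computation is to pin down the image of $[R,F]$ in this quotient: modulo $\gamma_4(F)$ only the weight-two relators contribute, so this image is spanned by the commutators $[s_i,s_j,s_k]$ with $j-i\ge 2$ and $k$ arbitrary. A short Hall-basis analysis, invoking the Jacobi identity to rewrite the non-basic cases, shows that each such generator expands into basic commutators that either have three distinct indices or have the shape $[s_j,s_i,s_i]$ or $[s_j,s_i,s_j]$ with $j\ge i+2$. None of these coincide with the adjacent commutators $[s_{i+1},s_i,s_i]$, $[s_{i+1},s_i,s_{i+1}]$, so projecting onto the latter coordinates annihilates the image of $[R,F]$ and yields $b_i+c_i=0$ and $c_i=0$; hence $b_i=c_i=0$.

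It remains to kill the $d_i$, and here I would not attempt a direct computation but instead reduce to the modular case already handled. The inclusion $R \subseteq R'$, where $R'$ is the relator subgroup for the presentation of $\UT_n(\ZZ/2\ZZ)$ (which merely adds the relators $s_i^2$), induces by functoriality of Hopf's formula a homomorphism $(R\cap[F,F])/[R,F] \to (R'\cap[F,F])/[R',F]$ sending each class $[[s_i,s_{i+1}],[s_{i+1},s_{i+2}]]$ to the corresponding class in $\MM(\UT_n(\ZZ/2\ZZ))$. If $\prod_i[[s_i,s_{i+1}],[s_{i+1},s_{i+2}]]^{d_i}\in[R,F]$, its image lands in $[R',F]$; but Lemma~\ref{lem:unique} already shows these classes are independent modulo $[R',F]$ with the corresponding exponents forced to be even, so the restriction $d_i<2$ gives $d_i=0$. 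The main obstacle is thus localized in the weight-three layer: one must verify carefully that the relator-derived commutators never reach the two families of adjacent basic commutators carrying the $b_i,c_i$ data, which is precisely the structural reason these exponents are unconstrained by the relations and survive into the multiplier.
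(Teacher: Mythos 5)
Your proof is correct, but it follows a genuinely different route from the paper's. The paper disposes of this lemma in two lines: pick an even $m$ large compared to $|a_{ij}|,|b_i|,|c_i|$ (strictly, one wants $m>2|c_i|$, since the relevant divisor for $c_i$ is $\gcd(m,\binom{m}{2})=m/2$), map $\UT_n(\ZZ)$ onto $\UT_n(\ZZ/m\ZZ)$, and apply Lemma~\ref{lem:unique} wholesale, so that divisibility of each exponent together with the size restriction forces vanishing. You instead treat $a_{ij},b_i,c_i$ by a direct filtration argument over $\ZZ$ and reserve the reduction (to the fixed modulus $m=2$) for the $d_i$ alone. Your direct part is sound: with no power relators, $R\subseteq\gamma_2(F)$ and hence $[R,F]\subseteq\gamma_3(F)$, so the weight-two layer gives $a_{ij}=0$ outright; and in $\gamma_3(F)/\gamma_4(F)$ the image of $[R,F]$ is indeed spanned by the $[s_i,s_j,s_k]$ with $j-i\geq 2$, whose Hall-basis expansions (via the Jacobi identity when $k<i$) never involve the adjacent basic commutators $[s_{i+1},s_i,s_i]$ and $[s_{i+1},s_i,s_{i+1}]$, so projection gives $b_i+c_i=0$ and $c_i=0$ exactly, not merely up to divisibility. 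Your $d_i$ step, using the map induced by $R\subseteq R'$ and Lemma~\ref{lem:unique} at $m=2$, is also a correct (and unavoidable) appeal to the modular case, since that is where the paper's induction on $n$ and the $\UT_4(\ZZ/2\ZZ)$ computation reside; no circularity arises, as Lemma~\ref{lem:unique} precedes this one. What the paper's argument buys is brevity and uniformity: one reduction handles all exponents at once, at the cost of the slightly delicate choice of $m$ (even, and large relative to the data). What yours buys is structural transparency: it exhibits why the integral case is cleaner (exact vanishing in the free abelian layers $\gamma_k(F)/\gamma_{k+1}(F)$ instead of divisibility bookkeeping), it needs no modulus depending on the given exponents, and it confines the finite reduction to the genuine $2$-torsion phenomenon, where the bound $d_i<2$ is already part of the hypothesis.
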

\begin{proof}
Pick any even positive integer $m$ greater than $\max \{ |a_{ij}|, |b_i|, |c_i|, 2 \}$. There is a canonical epimorphism $\UT_n( \ZZ) \to \UT_n(\ZZ/m\ZZ)$, inducing a homomorphism of multipliers. The product of the lemma gets mapped into the situation of Lemma \ref{lem:unique} and the proof is thus complete.
\end{proof}

\begin{theorem} \label{thm:main2}
The Schur multiplier of $\UT_n( \ZZ)$ is isomorphic to 
\[
	(\ZZ/2\ZZ)^{n-3} \oplus \ZZ^{\frac{(n+1)(n-2)}{2}}.	
\]
\end{theorem}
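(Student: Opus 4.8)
The plan is to combine the integral analogues of Lemmas~\ref{lem:orders}, \ref{lem:expansion}, and \ref{lem:unique2} in exactly the way the three lemmas for $\UT_n(\ZZ/m\ZZ)$ combined to yield Theorem~\ref{thm:main}. Since the multiplier equals $(R \cap [F,F])/[R,F]$, I would first record that the generating set furnished by the integral version of Lemma~\ref{lem:expansion} consists of the same four families as in Table~\ref{tab:1}, namely
\[
[s_i, s_j] \ (i < j-1), \quad [s_i, s_{i+1}, s_i], \quad [s_i, s_{i+1}, s_i s_{i+1}^{-1}], \quad [[s_i, s_{i+1}], [s_{i+1}, s_{i+2}]],
\]
with only the exponents of the last family restricted.

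Next I would determine the order of each generator. The crucial point is that the torsion bounds in Lemma~\ref{lem:orders} for the first three families all originated from the relators $s_i^m = 1$, which are absent over $\ZZ$; consequently these elements now have infinite order. By contrast, the order-dividing-$2$ bound for $[[s_i, s_{i+1}], [s_{i+1}, s_{i+2}]]$ arose purely from the Hall--Witt computation, which uses only the commutator relators still present in the integral presentation, and therefore survives unchanged. Independence is then supplied directly by Lemma~\ref{lem:unique2}: any product of these generators lying in $[R,F]$ forces every exponent to vanish, with the order-$2$ exponents read modulo $2$. This shows that the subgroup they generate, hence the whole multiplier, splits as a direct sum of free cyclic factors from the first three families together with copies of $\ZZ/2\ZZ$ from the last.

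Finally I would carry out the bookkeeping. The non-adjacent commutators $[s_i, s_j]$ number $\binom{n-2}{2}$; each of the families $[s_i, s_{i+1}, s_i]$ and $[s_i, s_{i+1}, s_i s_{i+1}^{-1}]$ contributes $n-2$ generators; and the family $[[s_i, s_{i+1}], [s_{i+1}, s_{i+2}]]$ contributes $n-3$. Adding the free ranks gives
\[
\binom{n-2}{2} + 2(n-2) = \frac{(n-2)(n-3) + 4(n-2)}{2} = \frac{(n+1)(n-2)}{2},
\]
while the torsion part is $(\ZZ/2\ZZ)^{n-3}$, which is precisely the claimed isomorphism.

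I do not expect a genuine obstacle here, as the substantive work is already contained in the three integral lemmas invoked above; the only point demanding care is to confirm that none of the torsion-producing cancellations used for $\UT_n(\ZZ/m\ZZ)$ silently reappear over $\ZZ$---that is, to verify that the vanishing of $s_i^m$ was the sole source of finite order for the first three families, so that they contribute free rather than torsion summands, and that the second-family bound $\gcd(m, \binom{m}{2})$ genuinely dissolves in the integral limit.
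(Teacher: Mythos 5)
Your proposal is correct and takes essentially the same route as the paper: the paper likewise observes that deleting the relators $s_i^m$ removes the torsion bounds on the first three families of generators while the Hall--Witt argument bounding the order of $[[s_i,s_{i+1}],[s_{i+1},s_{i+2}]]$ by $2$ survives, keeps the expansion statement of Lemma~\ref{lem:expansion} with only the last family's exponents restricted, and uses Lemma~\ref{lem:unique2} for independence. The bookkeeping is also identical, with $\binom{n-2}{2}+2(n-2)=\frac{(n+1)(n-2)}{2}$ free generators and $n-3$ generators of order two.
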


Alternatively, one can check that the preceding lemmas can be suitably adapted when taking $m = 0$, thereby directly incorporating Theorem \ref{thm:main2} into Theorem \ref{thm:main}.

The obtained results may be applied to the questions regarding minimality (in terms of the number of generators and relators) of the above presentations of groups of unitriangular matrices over $\ZZ/2\ZZ$ and $\ZZ$. Note that minimality over $\ZZ/2\ZZ$ implies minimality over $\ZZ$. In \cite{Biss01}, this is done for presentations of the groups $\UT_n(\ZZ/q\ZZ)$ with $q$ an odd prime by computing the $p$-rank of $\HH^2(\UT_n(\ZZ/q\ZZ), \FF_p)$ using a description of the Schur multiplier. Relying on Theorem \ref{thm:main}, one may check that the very same methods work in the even case $\ZZ/2\ZZ$.

\begin{corollary}
The given presentations of the groups $\UT_n(\ZZ/2\ZZ)$ and $\UT_n(\ZZ)$ are minimal.
\end{corollary}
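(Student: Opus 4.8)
The plan is to use the standard cohomological criterion for minimality of a presentation of a finite $p$-group: it suffices to exhibit a presentation on a minimal generating set whose number of generators attains the lower bound $\dim_{\FF_p}\HH^1(G,\FF_p)$ and whose number of relators attains the lower bound $\dim_{\FF_p}\HH^2(G,\FF_p)$. I would specialise first to $p=2$ and $G=\UT_n(\ZZ/2\ZZ)$, following the route taken in \cite{Biss01} for odd $q$, and afterwards descend to $\UT_n(\ZZ)$.

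The generator count is immediate: the abelianisation of $\UT_n(\ZZ/2\ZZ)$ is $(\ZZ/2\ZZ)^{n-1}$, generated by the images of the $s_i$, so $\dim_{\FF_2}\HH^1(G,\FF_2)=n-1=|\mathcal S|$. For the relator count I would invoke the universal coefficient theorem,
\[
\dim_{\FF_2}\HH^2(G,\FF_2)=\dim_{\FF_2}\operatorname{Hom}(\MM(G),\FF_2)+\dim_{\FF_2}\operatorname{Ext}(G^{\mathrm{ab}},\FF_2),
\]
and feed in Theorem \ref{thm:main}. For $m=2$ the middle summand of the multiplier collapses, leaving $\MM(G)\cong(\ZZ/2\ZZ)^{n-3}\oplus(\ZZ/2\ZZ)^{\binom{n-1}{2}}$, so the first term equals $(n-3)+\binom{n-1}{2}$; since $G^{\mathrm{ab}}=(\ZZ/2\ZZ)^{n-1}$, the second term equals $n-1$. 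Hence $\dim_{\FF_2}\HH^2(G,\FF_2)=\binom{n-1}{2}+2n-4$.

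The crux, and the feature genuinely distinguishing the even case from the odd one in \cite{Biss01}, is to match this number against the relators. In the odd case the displayed relators are already independent modulo $[R,F]F^p$, and Lemma \ref{lem:unique} is precisely what certifies this. For $p=2$ one must first absorb the redundancy that the power relators impose on the weight-three commutators: working modulo $s_i^2$ one finds the exact identity $[s_i,s_{i+1},s_i]=[s_i,s_{i+1}]^{-2}$, and modulo $s_{i+1}^2$ one finds $[s_i,s_{i+1},s_{i+1}]=[s_i,s_{i+1}]^{-2}$, so for each $i$ the two weight-three relators coincide and only one survives as an independent relation. Verifying that exactly $\dim_{\FF_2}\HH^2(G,\FF_2)$ of the listed relators are independent — equivalently, that after removing this redundancy no further collapse occurs, which is guaranteed by the independence in Lemma \ref{lem:unique} — is where the main difficulty lies; this careful bookkeeping is the analogue of Biss's $p$-rank computation in the even case.

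Finally, for $\UT_n(\ZZ)$ there are no power relators, so no such redundancy arises, and I would argue directly by deficiency. The relevant lower bound on the number of relators of a presentation on a minimal generating set is $|\mathcal S|-\rank_{\ZZ}(G^{\mathrm{ab}})+d(\MM(G))$, where $d$ denotes the minimal number of generators of an abelian group. With $G^{\mathrm{ab}}=\ZZ^{n-1}$ and $\MM(\UT_n(\ZZ))\cong(\ZZ/2\ZZ)^{n-3}\oplus\ZZ^{(n+1)(n-2)/2}$ from Theorem \ref{thm:main2}, this bound evaluates to $(n-3)+\tfrac{(n+1)(n-2)}{2}$, which is exactly the number of relators remaining once the power relators are deleted. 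Minimality over $\ZZ$ therefore follows, consistently with the remark that minimality over $\ZZ/2\ZZ$ implies it; the only delicate point is again the $2$-torsion summand $(\ZZ/2\ZZ)^{n-3}$ of the multiplier, which is pinned down by comparison with the $\ZZ/2\ZZ$ computation.
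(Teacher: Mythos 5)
Your treatment of $\UT_n(\ZZ)$ is correct and complete: the lower bound $r \ge g - \rank(G^{\mathrm{ab}}) + d(\MM(G))$ (see \cite{Beyl82}), fed with Theorem \ref{thm:main2}, evaluates to $(n-3)+\tfrac{(n+1)(n-2)}{2} = \binom{n}{2}+n-4$, which is exactly the number of relators in the power-free presentation, so that presentation is minimal in both counts. Note that this argument is self-contained, whereas the paper obtains the $\ZZ$ case as a consequence of the $\ZZ/2\ZZ$ case ("minimality over $\ZZ/2\ZZ$ implies minimality over $\ZZ$") and justifies the $\ZZ/2\ZZ$ case only by asserting that the $p$-rank computation of \cite{Biss01} carries over; your direct deficiency argument is the more robust route to the $\ZZ$ statement.

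The $\ZZ/2\ZZ$ half, however, has a genuine gap, and your own identities expose it. The presentation of Table \ref{tab:relators} with $m=2$ has $(n-1)+\binom{n-2}{2}+2(n-2)+(n-3)=\binom{n-1}{2}+3n-6$ relators, which exceeds your (correct) value $\dim_{\FF_2}\HH^2(\UT_n(\ZZ/2\ZZ),\FF_2)=\binom{n-1}{2}+2n-4$ by exactly $n-2$. Minimality of a presentation means that its \emph{total} number of relators attains the lower bound; it is not enough that some subset of the listed relators be independent. Your exact identities $[s_i,s_{i+1},s_i]=[s_i,s_{i+1}]^{-2}$ modulo $s_i^2$ and $[s_i,s_{i+1},s_{i+1}]=[s_i,s_{i+1}]^{-2}$ modulo $s_{i+1}^2$ show precisely that the $n-2$ relators $[s_i,s_{i+1},s_{i+1}]$ are consequences of $s_i^2$, $s_{i+1}^2$ and $[s_i,s_{i+1},s_i]$, so what your argument establishes is minimality of the presentation obtained by \emph{deleting} one weight-three family, not of the given presentation. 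Already for $n=3$ the discrepancy is concrete: the given presentation of $\UT_3(\ZZ/2\ZZ)\cong D_4$ has four relators, while $\dim_{\FF_2}\HH^2(D_4,\FF_2)=3$ and $\langle s_1,s_2 \mid s_1^2, s_2^2, (s_1s_2)^4\rangle$ realizes this minimum. Hence the step you defer as "careful bookkeeping" — verifying that the listed relators match $\dim_{\FF_2}\HH^2$ — cannot be carried out: the counts genuinely differ by $n-2$, and no independence argument (in particular not Lemma \ref{lem:unique}, whose basis uses $[s_i,s_{i+1},s_is_{i+1}^{-1}]$, an element that is trivial in the multiplier when $m=2$ since $\gcd(2,\binom{2}{2})=1$) can repair this. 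To finish honestly you must either conclude that the given $\ZZ/2\ZZ$ presentation is \emph{not} minimal, or restate the claim for the reduced presentation; your proposal does neither, and this unresolved mismatch is also latent in the paper's one-line appeal to the odd-case method, where, in contrast, the relator count does equal $\dim_{\FF_q}\HH^2$ on the nose. (A small further slip: independence of relators should be read modulo $[R,F]R^p$, not $[R,F]F^p$.)
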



%
%
%
%

\section{Multipliers over products of cyclic rings}
\numberwithin{equation}{section}
\noindent In this section, we extend Theorem \ref{thm:main} by determining the Schur multiplier of the group $\UT_n(\prod_{i=1}^r \ZZ/m_i\ZZ)$, where we assume $m_i | m_{i+1}$. As with Theorem \ref{thm:main2}, one may take $m_i = 0$ here and the result remains valid. The bulk of the calculation consists of the case $r = 2$, the rest is done inductively.

So assume first that $r = 2$, the group in question being $\UT_n(\ZZ/m_1\ZZ \times \ZZ/m_2\ZZ)$ with $m_1$ dividing $m_2$.  The presentation of this group is given similarly as in the previous section, c.f. \cite{Biss01}. The generators may be chosen to be $\mathcal S = \{ s_i(1) \mid 1 \leq i \leq n-1 \} \cup \{ s_i(x) \mid 1 \leq i \leq n-1 \}$, where $s_i(\lambda)$ is meant to represent the elementary matrix $I + \lambda E_{i,i+1}$ in the unitriangular group, and $1 \equiv (1,1), x \equiv (0,1)$. The set of relators $\mathcal R$ consists of elements in Table \ref{tab:relators2}, where the notation $s_i(1|x)$ denotes any element of the set $\{ s_i(1), s_i(x)\}$.
\begin{table}[h]
	\caption{Relators in the presentation of $\UT_n(\ZZ/m_1 \times \ZZ/m_2)$.}
	\label{tab:relators2}
	\begin{center}
		\begin{tabular}{clr}
\toprule
$(\mathcal R.1)$	& $s_i(1)^{m_1} s_i(x)^{-m_1}$
 		& $1 \leq i \leq n-1$ \\

		& $s_i(x)^{m_2}$
 		& $1 \leq i \leq n-1$ \\

$(\mathcal R.2)$	& $[s_i(1), s_i(x)]$ 
 		& $1 \leq i \leq n-1$ \\

$(\mathcal R.3)$	& $[s_i(1|x), s_j(1|x)]$
		& $1 \leq i < j-1 \leq n-2$ \\

$(\mathcal R.4)$	& $[s_i(1|x)^{-1}, s_{i+1}(x)^{-1}] [s_i(x), s_{i+1}(1)]^{-1}$
		& $1 \leq i \leq n-2$ \\

$(\mathcal R.5)$	& $[s_i(1|x), s_{i+1}(1), s_i(1)]$
		& $1 \leq i \leq n-2$ \\

		& $[s_i(1|x), s_{i+1}(1), s_{i+1}(1)]$
		& $1 \leq i \leq n-2$ \\

$(\mathcal R.6)$	& $[[s_i(1|x), s_{i+1}(1)], [s_{i+1}(1|x), s_{i+2}(1)]]$
		& $1 \leq i \leq n-3$  \\ \bottomrule
		\end{tabular}
	\end{center}
\end{table}

First off, we establish a bound on the orders of relators gathered in Table \ref{tab:2}.

\begin{table}[htb]
\caption{Generators of $\MM(\UT_n(\ZZ/m_1\ZZ \times \ZZ/m_2\ZZ))$ and their orders.}
\label{tab:2}
	\begin{center}
		\begin{tabular}{clr}
\toprule
$(1)$	& $[s_i(1), s_i(x)]$
	& $m_1$ \\ \midrule

$(2)$	& $[s_i(1), s_j(1)]$
	& $m_2$ \\

	& $[s_i(x), s_j(x)][s_i(1), s_j(1)]^{-1}$
	& $m_1$ \\

	& $[s_i(x), s_j(1)][s_i(x), s_j(x)]^{-1}$
	& $m_1$ \\

	& $[s_i(1), s_j(x)][s_i(x), s_j(x)]^{-1}$
	& $m_1$ \\ \midrule

$(3)$	& $[s_i(1), s_{i+1}(x)]  [s_i(x), s_{i+1}(1)]^{-1}$
	& $m_1$ \\ 

	& $[s_i(x), s_{i+1}(x)]  [s_i(x), s_{i+1}(1)]^{-1}$
	& $m_1$ \\ \midrule 

$(4)$	& $[s_i(1), s_{i+1}(1), s_i(1)]$
	& $m_2$ \\

	& $[s_i(1), s_{i+1}(1), s_{i+1}(1) s_i(1)^{-1}]$
	& $\gcd(m_2, \binom{m_2}{2})$ \\

	& $[s_i(1), s_{i+1}(1), s_i(x)s_i(1)^{-1}]$
	& $m_1$ \\

	& $[s_i(1), s_{i+1}(1), s_i(x)s_{i+1}(x)^{-1} (s_i(1)s_{i+1}(1)^{-1})^{-1}]$
	& $\gcd(m_1, \binom{m_1}{2})$  \\ \midrule

$(5)$	& $[s_i(1), s_{i+1}(1), s_{i+2}(1), s_{i+1}(1)]$
	& $\gcd(2,m_2)$\\

	& $[s_i(1), s_{i+1}(1), s_{i+2}(1), s_{i+1}(x)s_{i+1}(1)^{-1}]$
	& $\gcd(2,m_1)$ \\   \bottomrule
		\end{tabular}
	\end{center}
\end{table}

\begin{lemma} \label{lem:orders2}
 In the group $F/[R,F]$, elements of Table \ref{tab:2} have orders dividing the numbers in the far right column.
 \end{lemma}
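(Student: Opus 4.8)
The plan is to mirror the structure of the proof of Lemma \ref{lem:orders}, treating each block of Table \ref{tab:2} by the same commutator-collection techniques, but now keeping track of the two ring elements $1$ and $x$ simultaneously. Since all relators of Table \ref{tab:relators2} are central in $F/[R,F]$, every computation reduces to manipulating commutators modulo central relators. First I would record the basic consequences of the power relations $(\mathcal R.1)$: from $s_i(x)^{m_2} = 1$ and $s_i(1)^{m_1} = s_i(x)^{m_1}$ I get, for any generator $t$, the collection identities $[s_i(x)^{m_2}, t] = 1$ and $[s_i(1)^{m_1} s_i(x)^{-m_1}, t] = 1$, expanded via the standard formula $[a^k, b] = [a,b]^k [a,b,a]^{\binom{k}{2}} \cdots$ into products of higher commutators. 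These give the orders in blocks $(1)$ and $(4)$ exactly as in the cyclic case, with $m_1$ or $m_2$ appearing according to whether $x$ or $1$ carries the power.

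Block $(1)$ is immediate: $[s_i(1), s_i(x)]^{m_1} = [s_i(1)^{m_1}, s_i(x)] = [s_i(x)^{m_1}, s_i(x)] = 1$, using $(\mathcal R.1)$ and centrality. For block $(2)$, the point is that the four commutators $[s_i(\ast), s_j(\ast)]$ for non-adjacent $i,j$ differ from one another only by commutators built from $[s_i(1)s_i(x)^{-1}, \cdot]$-type terms, whose orders are controlled by $m_1$ because $s_i(1)^{m_1}$ and $s_i(x)^{m_1}$ agree; the single ``pure'' generator $[s_i(1), s_j(1)]$ retains order $m_2$ via $[s_i(1), s_j(1)^{m_2}]$. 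I would establish block $(2)$ by writing each listed element as a power of a power-relator commutator, exactly as the displayed identities in Lemma \ref{lem:orders} do for $[s_i, s_{i+1}^m]$. Block $(3)$ follows the same way using relator $(\mathcal R.4)$, which already encodes the relation between $[s_i(x), s_{i+1}(1)]$ and the adjacent mixed commutators, so raising to the $m_1$-th power and collecting gives the claimed order.

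For block $(4)$ the computations are the two-variable analogues of the $[s_i, s_{i+1}, s_i]$ and $[s_i, s_{i+1}, s_i s_{i+1}^{-1}]$ arguments: I would expand $[s_i(1), s_{i+1}(1)^{m_2}]$ and $[s_i(1)^{m_1}, s_{i+1}(1)]$ (equivalently $[s_i(x)^{m_1}, s_{i+1}(1)]$ after using $(\mathcal R.1)$) and read off the orders of the third and fourth entries as $\gcd(m_2, \binom{m_2}{2})$ and $\gcd(m_1, \binom{m_1}{2})$ respectively, while the mixed entry $[s_i(1), s_{i+1}(1), s_i(x)s_i(1)^{-1}]$ acquires order $m_1$ since $s_i(x)s_i(1)^{-1}$ has $m_1$-th power central. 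Block $(5)$ is where the genuine work lies and will be the main obstacle: it requires the Hall--Witt identity computation from Lemma \ref{lem:orders} showing that $[[s_i, s_{i+1}], [s_{i+1}, s_{i+2}]]^2 = 1$, but now reproduced for both the pure generators (giving order $\gcd(2, m_2)$) and the $x$-twisted variant (giving order $\gcd(2, m_1)$). The delicate part is that the final cancellation in that argument relied on an element of $R$ detected inside $\UT_n(\ZZ/m\ZZ)$ by the matrix identification $s_i \mapsto I + E_{i,i+1}$; here I must check the corresponding left-commutant lies in the normal closure of the new relator set $\mathcal R$ of Table \ref{tab:relators2}, verified in $\UT_n(\ZZ/m_1\ZZ \times \ZZ/m_2\ZZ)$ via $s_i(\lambda) \mapsto I + \lambda E_{i,i+1}$, and I would track carefully which of $m_1, m_2$ the $\gcd(2, \cdot)$ factor records depending on whether a twisting by $x$ is present. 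Apart from this Hall--Witt step, the remaining entries are routine collection-formula bookkeeping, so I would state them with representative calculations and defer the rest.
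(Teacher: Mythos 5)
Your overall architecture is the same as the paper's: work in $F/[R,F]$ where every relator is central, obtain blocks $(1)$ and $(2)$ from the power relators $(\mathcal R.1)$ by commutator collection, and obtain block $(5)$ by reducing, via $(\mathcal R.6)$ and the matrix identification $s_i(\lambda) \mapsto I + \lambda E_{i,i+1}$, to the Hall--Witt computation of Lemma \ref{lem:orders}. Those parts of your plan match the paper. The genuine gap is in blocks $(3)$ and $(4)$, which you dismiss as ``routine collection-formula bookkeeping'' while locating the ``genuine work'' in block $(5)$; in the paper it is exactly the other way around: block $(5)$ is a short reduction to the cyclic case, and blocks $(3)$ and $(4)$ carry the bulk of the proof.

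Here is why your shortcut fails. In block $(2)$ the commutators $[s_i(\cdot),s_j(\cdot)]$ with $i<j-1$ are themselves relators, hence central, so collection is exact and your argument goes through. In block $(3)$ the weight-two commutators $[s_i(\cdot),s_{i+1}(\cdot)]$ are \emph{not} central, so comparing $[s_i(1)^{m_1},s_{i+1}(x)]$ with $[s_i(x)^{m_1},s_{i+1}(x)]$ (equal modulo $[R,F]$ by $(\mathcal R.1)$) via collection leaves
\[
[s_i(1),s_{i+1}(x)]^{m_1}\,[s_i(1),s_{i+1}(x),s_i(1)]^{\binom{m_1}{2}} = [s_i(x),s_{i+1}(x)]^{m_1}\,[s_i(x),s_{i+1}(x),s_i(x)]^{\binom{m_1}{2}},
\]
and since $\binom{m_1}{2}$ is not a multiple of $m_1$ when $m_1$ is even, ``raising to the $m_1$-th power and collecting'' does not finish: one must prove the two weight-three correction terms are \emph{equal} in $F/[R,F]$ so that they cancel. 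This is the content of the paper's chains of identities (which use $(\mathcal R.2)$, not $(\mathcal R.4)$ as you suggest), e.g.\ $[s_i(1),s_{i+1}(x),s_i(1)]=[s_i(x),s_{i+1}(x),s_i(x)]$; the relator $(\mathcal R.4)$ only gives centrality of the block-$(3)$ elements, not the needed equality of $m_1$-th powers. The same gap recurs at the last entry of block $(4)$: expanding $[s_i(1)^{m_1},s_{i+1}(1)]$ against $[s_i(x)^{m_1},s_{i+1}(1)]$, as you propose, never produces the commutator $[s_i(1), s_{i+1}(1), s_i(x)s_{i+1}(x)^{-1} (s_i(1)s_{i+1}(1)^{-1})^{-1}]$; the paper instead expands the trivial commutators $[(s_i(1)s_i(x)^{-1})^{m_1}, s_{i+1}(1)s_{i+1}(x)^{-1}]$ and $[s_i(1)s_i(x)^{-1},(s_{i+1}(1)s_{i+1}(x)^{-1})^{m_1}]$ --- commutators of the difference elements, not of single generators --- and then needs the block-$(3)$ identities a second time to reduce the resulting weight-three terms to $[s_i(1),s_{i+1}(1),s_i(1)s_i(x)^{-1}]$ and its companion. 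Without supplying these identities, your bounds degenerate to statements involving $\binom{m_1}{2}$ rather than the claimed divisors, so this deferred ``bookkeeping'' is precisely where the lemma's real work lies.
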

\begin{proof}
All calculations are done in the group $F/[R,F]$. We first have $[s_i(1), s_i(x)]^{m_1} = [s_i(1)^{m_1}, s_i(x)] = 1$, which proves $(1)$. Next, the congruence $s_i(1)^{m_2} \equiv s_i(x)^{m_2} \equiv 1 \pmod{R}$ implies $[s_i(1), s_j(1)]^{m_2} = 1$. The other three orders in $(2)$ are immediate from this one. For $(3)$, note that the product $[s_i(1), s_{i+1}(x)] [s_i(x), s_{i+1}(1)]^{-1}$ is trivial in the group of unitriangular matrices, hence central in $F/[R,F]$. The commutator $[[s_i(1), s_{i+1}(x)], [s_i(x), s_{i+1}(1)]]$ is thus trivial, so it suffices to prove the equality $[s_i(1), s_{i+1}(x)]^{m_1} = [s_i(x), s_{i+1}(1)]^{m_1}$, and similarly $[s_i(x), s_{i+1}(x)]^{m_1} = [s_i(x), s_{i+1}(1)]^{m_1}$ for the other relator. Using relation $(\mathcal R.1)$, we have
\begin{align*}
&[s_i(1)^{m_1}, s_{i+1}(x)] \\
&\quad = [s_i(1), s_{i+1}(x)]^{m_1} \cdot [s_i(1), s_{i+1}(x), s_i(1)]^{\binom{m_1}{2}} \\
&\quad = [s_i(x), s_{i+1}(x)]^{m_1} \cdot [s_i(x), s_{i+1}(x), s_i(x)]^{\binom{m_1}{2}}.
\end{align*}
Note that the factors of weight three are equal by relation $(\mathcal R.2)$, since
\begin{align*}
&[s_i(1), s_{i+1}(x), s_i(1)] \\
&\quad = [s_i(x), s_{i+1}(x), s_i(1)] \\
&\quad = [s_i(x)^{-1} s_i(x)^{s_{i+1}(x)}, s_i(1)] \\
&\quad = [s_i(x)^{s_{i+1}(x)}, s_i(1)] \\
&\quad = [s_i(x), s_i(1) [s_i(1), s_{i+1}(x)^{-1}]] \\
&\quad = [s_i(1), s_{i+1}(x), s_i(x)] \\
&\quad = [s_i(x), s_{i+1}(x), s_i(x)].
\end{align*}
This implies the equality $[s_i(1), s_{i+1}(x)]^{m_1} = [s_i(x), s_{i+1}(x)]^{m_1}$. In a similar fashion, we have
\begin{align*}
&[s_i(x), s_{i+1}(x)^{m_1}] \\
&\quad = [s_i(x), s_{i+1}(x)]^{m_1} \cdot [s_i(x), s_{i+1}(x), s_{i+1}(x)]^{\binom{m_1}{2}} \\
&\quad = [s_i(x), s_{i+1}(1)]^{m_1} \cdot [s_i(x), s_{i+1}(1), s_{i+1}(1)]^{\binom{m_1}{2}}
\end{align*}
and
\begin{align*}
& [s_i(x), s_{i+1}(1), s_{i+1}(1)] & \\
&\quad = [s_i(x), s_{i+1}(x), s_{i+1}(1)] \\
&\quad = [s_{i+1}(x)^{-s_i(x)} s_{i+1}(x), s_{i+1}(1)] \\
&\quad = [s_{i+1}^{-s_i(x)}, s_{i+1}(1)] \\
&\quad = [s_{i+1}(x)^{-1}, s_{i+1}(1)[s_{i+1}(1), s_i(x)^{-1}]] \\
&\quad = [s_{i+1}(x), [s_{i+1}(1), s_i(x)^{-1}]]^{-1} \\
&\quad = [s_i(x), s_{i+1}(1), s_{i+1}(x)] \\
&\quad = [s_i(x), s_{i+1}(x), s_{i+1}(x)],
\end{align*}
which implies  $[s_i(x), s_{i+1}(x)]^{m_1} = [s_i(x), s_{i+1}(1)]^{m_1}$. This concludes the proof of $(3)$. The first two relators of $(4)$ are checked as in the proof of Lemma \ref{lem:orders}, and the third one is straightforward. For the last one, expand the trivial commutator $[(s_i(1) s_i(x)^{-1})^{m_1}, s_{i+1}(1) s_{i+1}(x)^{-1}]$ into
\[
[s_i(1) s_i(x)^{-1}, s_{i+1}(1) s_{i+1}(x)^{-1}]^{m_1} \cdot [s_i(1) s_i(x)^{-1}, s_{i+1}(1) s_{i+1}(x)^{-1}, s_i(1) s_i(x)^{-1}]^{\binom{m_1}{2}}
\]
and the commutator $[s_i(1) s_i(x)^{-1}, (s_{i+1}(1) s_{i+1}(x)^{-1})^{m_1}]$ into
\[
[s_i(1) s_i(x)^{-1}, s_{i+1}(1) s_{i+1}(x)^{-1}]^{m_1} \cdot [s_i(1) s_i(x)^{-1}, s_{i+1}(1) s_{i+1}(x)^{-1}, s_{i+1}(1) s_{i+1}(x)^{-1}]^{\binom{m_1}{2}}.
\]
Now note that both factors of weight three can be simplified. We have
\begin{align*}
& [s_i(1) s_i(x)^{-1}, s_{i+1}(1) s_{i+1}(x)^{-1}, s_i(1) s_i(x)^{-1}] \\
&\quad =  [[s_i(1), s_{i+1}(1) s_{i+1}(x)^{-1}]^{s_i(x)^{-1}} [s_i(x)^{-1}, s_{i+1}(1) s_{i+1}(x)^{-1}], s_i(1) s_i(x)^{-1}] \\
&\quad = [s_i(1), s_{i+1}(1) s_{i+1}(x)^{-1}, s_i(1) s_i(x)^{-1}] [s_i(x), s_{i+1}(1)s_{i+1}(x)^{-1},s_i(1)s_i(x)^{-1}]^{-1},
\end{align*}
where the second factor is trivial, since
\begin{align*}
&  [s_i(x), s_{i+1}(1)s_{i+1}(x)^{-1},s_i(1)s_i(x)^{-1}] \\
&\quad = [[s_i(x), s_{i+1}(x)^{-1}] [s_i(x), s_{i+1}(1)]^{s_{i+1}^{-1}}, s_i(1) s_i(x)^{-1}] \\
&\quad = [s_i(x), s_{i+1}(x)^{-1}, s_i(1) s_i(x)^{-1}]^* [[s_i(x), s_{i+1}(1)]^{s_{i+1}(1)^{-1}}, s_i(1) s_i(x)^{-1}] \\
&\quad = [s_i(x), s_{i+1}(x), s_i(1) s_i(x)^{-1}]^{-*} [s_i(x), s_{i+1}(1), s_i(1) s_i(x)^{-1}],
\end{align*}
and both of these are trivial by calculations made in part $(3)$. Hence
\begin{align*}
& [s_i(1) s_i(x)^{-1}, s_{i+1}(1) s_{i+1}(x)^{-1}, s_i(1) s_i(x)^{-1}] \\
&\quad = [s_i(1), s_{i+1}(1) s_{i+1}(x)^{-1}, s_i(1) s_i(x)^{-1}] \\
&\quad = [[s_i(1), s_{i+1}(x)^{-1}][s_i(1), s_{i+1}(1)]^{s_{i+1}(1)^{-1}}, s_i(1) s_i(x)^{-1}] \\
&\quad = [s_i(1), s_{i+1}(x)^{-1}, s_i(1) s_i(x)^{-1}]^* [s_i(1), s_{i+1}(1), s_i(1) s_i(x)^{-1}] \\
&\quad = [s_i(1), s_{i+1}(1), s_i(1) s_i(x)^{-1}]
\end{align*}
and analogously
\begin{align*}
& [s_i(1) s_i(x)^{-1}, s_{i+1}(1) s_{i+1}(x)^{-1}, s_{i+1}(1) s_{i+1}(x)^{-1}] \\
&\quad = [s_i(1), s_{i+1}(1), s_{i+1}(1) s_{i+1}(x)^{-1}].
\end{align*}
Thus the order of the relator $[s_i(1), s_{i+1}(1), s_i(x)s_{i+1}(x)^{-1} (s_i(1)s_{i+1}(1)^{-1})^{-1}]$ indeed divides both $m_1$ and $\binom{m_1}{2}$. Finally, we apply calculations made in the proof of Lemma \ref{lem:orders} to obtain $(5)$. Note that
\begin{align*}
  [s_{i}(1|x), s_{i+1}(1), s_{i+2}(1), s_{i+1}(1|x)] =  [s_{i+2}(1), s_{i+1}(1|x), s_{i}(1|x), s_{i+1}(1)]^{-1}
\end{align*}
via the commutator $[[s_i(1|x), s_{i+1}(1)], [s_{i+1}(1|x), s_{i+2}(1)]]$. This is in turn equal to $[s_{i}(1), s_{i+1}(1), s_{i+2}(1), s_{i+1}(x)]$ if $x$ appears at least once, as the left commutant only depends on representatives modulo $R$, and to $[s_{i}(1), s_{i+1}(1), s_{i+2}(1), s_{i+1}(1)]$  otherwise. These two are of order dividing $2$ by the same arguments as those made in the cyclic case.
\end{proof}

Next, we prove these commutators are in fact a generating set.

\begin{lemma} \label{lem:expansionProducts}
Relators of Table \ref{tab:2} generate the group $(R\cap[F,F])/[R,F]$.
\end{lemma}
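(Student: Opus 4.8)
The plan is to follow the structure of Lemma \ref{lem:expansion} verbatim, adding the bookkeeping forced by the two families of generators $s_i(1)$ and $s_i(x)$. Let $w \in R \cap [F,F]$ and expand it as a product of conjugates of the relators of Table \ref{tab:relators2}. Since $[r,f] \in [R,F]$ for every relator $r$ and every $f \in F$, the subgroup $R/[R,F]$ is central in $F/[R,F]$, so each conjugate $g r^{\pm 1} g^{-1}$ is congruent to $r^{\pm 1}$ modulo $[R,F]$. Hence $w$ is congruent, modulo $[R,F]$, to a product of integer powers of the relators $(\mathcal R.1)$--$(\mathcal R.6)$.

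The core step is to rewrite each of these relators modulo $[R,F]$ as a product of generators of Table \ref{tab:2}. Relator $(\mathcal R.2)$ is exactly generator $(1)$. For fixed $i < j-1$, the four commutators $[s_i(a), s_j(b)]$ with $a,b \in \{1,x\}$ coming from $(\mathcal R.3)$ are obtained from the four generators labelled $(2)$ by a unimodular change of variables (these weight-two commutators commute modulo $[R,F]$), so each lies in their span. For $(\mathcal R.4)$ I would replace $[s_i(1|x)^{-1}, s_{i+1}(x)^{-1}]$ by $[s_i(1|x), s_{i+1}(x)]$ modulo $\gamma_3(F)$, matching the two choices of $1|x$ with the two generators labelled $(3)$. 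The remaining relators $(\mathcal R.5)$, $(\mathcal R.6)$, together with the weight-three and weight-four correction terms produced along the way, are then reduced to the generators labelled $(4)$ and $(5)$ using precisely the commutator identities already established in the proof of Lemma \ref{lem:orders2}: the computations there show how the $x$-substituted weight-three commutators such as $[s_i(x), s_{i+1}(1), s_i(1)]$, and the weight-four commutators in which $x$ occurs, collapse modulo $R$ onto the listed generators.

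After this rewriting, $w$ is congruent modulo $[R,F]$ to a product of generators of Table \ref{tab:2} times a product of powers of the two $(\mathcal R.1)$ relators $s_i(1)^{m_1} s_i(x)^{-m_1}$ and $s_i(x)^{m_2}$. All generators of Table \ref{tab:2} are commutators and so become trivial in the abelianization $F/[F,F]$, which is free abelian on the $s_i(1), s_i(x)$; the images of the $(\mathcal R.1)$ relators are the vectors $m_1 \overline{s_i(1)} - m_1 \overline{s_i(x)}$ and $m_2 \overline{s_i(x)}$, which are linearly independent over $\ZZ$. Since $w \in [F,F]$, the total image must vanish, forcing every exponent of an $(\mathcal R.1)$ relator to be zero. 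Thus $w$ lies in the subgroup generated by the relators of Table \ref{tab:2}.

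The main obstacle is the middle paragraph. In the cyclic case only the single auxiliary relator $[s_i, s_{i+1}, s_{i+1}]$ had to be re-expressed, whereas here each relator carries one or more choices of $1$ versus $x$, and the reductions at weights three and four generate correction terms that must be checked to remain inside the span of the Table \ref{tab:2} generators. The efficient route is to quote the identities from Lemma \ref{lem:orders2} rather than re-derive them, so that this lemma becomes essentially a repackaging of those computations together with the abelianization argument above.
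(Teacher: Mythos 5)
Your proposal is correct and follows essentially the same route as the paper's own proof: expand $w$ into conjugates of the relators (central modulo $[R,F]$), rewrite $(\mathcal R.2)$--$(\mathcal R.6)$ in terms of the Table \ref{tab:2} generators — with the weight-three corrections to $(\mathcal R.4)$ and the relators $(\mathcal R.5)$, $(\mathcal R.6)$ handled by the identities from Lemma \ref{lem:orders2} — and then use membership in $[F,F]$ together with the abelianization $F/[F,F]$ to force the exponents of the $(\mathcal R.1)$ relators to vanish. The only differences are cosmetic: the paper disposes of $(\mathcal R.1)$ first rather than last, and writes out the $(\mathcal R.4)$ reduction explicitly, while your linear-independence argument in $F/[F,F]$ spells out what the paper compresses into ``the relators $(\mathcal R.1)$ cancel out.''
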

\begin{proof}
 Let $w \in R \cap [F,F]$. Expand it as a product of conjugates of elements of $\mathcal R$. As $w$ is contained in $[F,F]$, the relators $(\mathcal R.1)$ cancel out. The next two, $(\mathcal R.2)$ and $(\mathcal R.3)$, are clear. Relators $(\mathcal R.4)$ can be obtained from Table \ref{tab:2} as
\begin{align*}
& [s_i(1)^{-1}, s_{i+1}(x)^{-1}] [s_i(x), s_{i+1}(1)]^{-1} \\
&\quad = [s_i(1), s_{i+1}(x)] [s_i(1), s_{i+1}(x), s_{i+1}(x)^{-1} s_i(1)^{-1}] [s_i(x), s_{i+1}(1)]^{-1} \\
&\quad = [s_i(1), s_{i+1}(x)] [s_i(x), s_{i+1}(1)]^{-1} [s_i(1), s_{i+1}(x), s_i(1)]^{-1} [s_i(1), s_{i+1}(x), s_{i+1}(x)]^{-1} \\
&\quad = [s_i(1), s_{i+1}(x)] [s_i(x), s_{i+1}(1)]^{-1} [s_i(1), s_{i+1}(1), s_i(x)]^{-1} [s_i(1), s_{i+1}(1), s_{i+1}(x)]^{-1},
\end{align*}
and relators $(\mathcal R.5)$ and $(\mathcal R.6)$ from the calculations made in lemmas \ref{lem:orders2} and \ref{lem:orders}.
\end{proof}

Finally, the uniqueness lemma asserts that relators of Table \ref{tab:2} are independent in the multiplier.

\begin{lemma} \label{lem:uniqueProducts}
Whenever a product of relators of Table \ref{tab:2} is an element of $[R,F]$, the corresponding exponents are divisible by the numbers in the far right column of Table \ref{tab:2}.
\end{lemma}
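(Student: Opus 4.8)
The plan is to mimic the proof of Lemma \ref{lem:unique}, filtering $F$ by its lower central series and stripping off the relators of Table \ref{tab:2} weight by weight. Write $w$ for the product in the statement and reduce each exponent to lie between $0$ and the corresponding entry of the far-right column; the goal is to force every exponent to be $0$. The relators are graded by commutator weight: groups $(1)$, $(2)$, $(3)$ have weight $2$, group $(4)$ has weight $3$, and group $(5)$ has weight $4$. I would therefore argue modulo $\gamma_3(F)$, then $\gamma_4(F)$, then $\gamma_5(F)$, at each stage comparing the image of $w$ with the corresponding graded piece of $[R,F]$, which is spanned by the images of the commutators $[r,f]$, $r \in \mathcal R$. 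The structural fact driving the induction on weight, exactly as in the cyclic case, is that a relator of weight $k$ contributes to $[R,F]$ only in weights $\geq k+1$; in particular the weight-$1$ relators $(\mathcal R.1)$ are the sole source of weight-$2$ contributions, and so on.

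Modulo $\gamma_3(F)$ the element $w$ becomes, in the free abelian group $\gamma_2(F)/\gamma_3(F)$ on the basic commutators $[s_a(\varepsilon),s_b(\delta)]$, a combination of the weight-$2$ relators of groups $(1)$--$(3)$, while the weight-$2$ part of $[R,F]$ is the sublattice generated by $m_1\big([s_i(1),s_k(\delta)]-[s_i(x),s_k(\delta)]\big)$ and $m_2[s_i(x),s_k(\delta)]$ coming from $(\mathcal R.1)$. Working one index pair at a time, I would compute the relevant quotient by writing down explicit homomorphisms onto $\ZZ/m_2\ZZ$ and $\ZZ/m_1\ZZ$. Here the hypothesis $m_1 \mid m_2$ is essential: it makes the $m_2$-multiple of every colour difference already lie in the lattice, so that the pure commutator $[s_i(1),s_j(1)]$ attains order $m_2$, whereas the generator of group $(1)$, the three colour-difference combinations of group $(2)$, and the two adjacent combinations of group $(3)$ all attain order $m_1$. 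This recovers the orders in $(1)$, $(2)$, $(3)$ and, with the size restriction on the exponents, kills them all.

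With the weight-$2$ exponents gone, $w$ now lies in $\gamma_3(F)$ and modulo $\gamma_4(F)$ is a combination of the four relators of group $(4)$; this step I expect to be the crux. The weight-$3$ part of $[R,F]$ now has two sources: the weight-$3$ tails of $[r,f]$ for the weight-$1$ relators $(\mathcal R.1)$, contributing the $\binom{m_1}{2}$- and $\binom{m_2}{2}$-multiples of the relevant triple commutators exactly as in Lemma \ref{lem:orders}, and the commutators $[r,s_k(\delta)]$ for the weight-$2$ relators $(\mathcal R.2)$, $(\mathcal R.3)$, $(\mathcal R.4)$. Discarding every weight-$3$ basic commutator that does not involve two consecutive generators, I anticipate being left with a system of congruences among the four group-$(4)$ exponents and the quantities $m_1, m_2, \binom{m_1}{2}, \binom{m_2}{2}$ --- the two-colour, two-modulus analogue of the relations $b_i+c_i=m\alpha_i-\binom{m}{2}\gamma_i$ and $c_i=-m\beta_i+\binom{m}{2}\gamma_i$ from Lemma \ref{lem:unique}. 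Solving it should yield divisibility by $m_2$, $\gcd(m_2,\binom{m_2}{2})$, $m_1$ and $\gcd(m_1,\binom{m_1}{2})$ in turn. The genuine difficulty, and the main obstacle of the whole proof, is the bookkeeping: identifying precisely which triple commutators in the two colours occur, keeping the $m_1$- and $m_2$-parts from contaminating one another, and once more invoking $m_1 \mid m_2$ to collapse the $m_2$-relations onto the $m_1$-ones.

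Finally, modulo $\gamma_5(F)$ only the two weight-$4$ relators of group $(5)$ survive, with prospective orders $\gcd(2,m_2)$ and $\gcd(2,m_1)$; for odd moduli they are already trivial, so only the even case carries content. Rather than re-run an induction on $n$, I would push $w$ forward along the two canonical epimorphisms $\UT_n(\ZZ/m_1\ZZ \times \ZZ/m_2\ZZ) \to \UT_n(\ZZ/m_j\ZZ)$ and appeal to the already-established Lemma \ref{lem:unique}. Under the projection collapsing the two colours, the first relator maps to the nonzero weight-$4$ class $[[s_i,s_{i+1}],[s_{i+1},s_{i+2}]]$ of the cyclic group while the second dies, forcing its exponent to be divisible by $\gcd(2,m_2)$; under the projection killing $s_i(x)$, both relators map to that same class, so combining the two congruences (and using $\gcd(2,m_1) \mid \gcd(2,m_2)$) pins down the second exponent modulo $\gcd(2,m_1)$. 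With the size restrictions this forces both to vanish, completing the proof.
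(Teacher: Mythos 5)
Your plan is coherent in outline, and two of its three computational steps are essentially right: the weight-2 lattice comparison (reduction mod $m_1$, which uses $m_1 \mid m_2$ to see that the weight-2 part of $[R,F]$ has all basic-commutator exponents divisible by $m_1$, plus a collapse-the-colours map onto $\ZZ/m_2\ZZ$ for the pure commutator $[s_i(1),s_j(1)]$) does kill groups $(1)$--$(3)$, and the weight-4 step via the two projections and Lemma \ref{lem:unique} is correct. The genuine gap is the weight-3 step, which you yourself call the crux and then leave as an ``anticipated'' system of congruences to be solved. This is not a routine transcription of Lemma \ref{lem:unique}: in the product setting the weight-3 part of $[R,F]$ receives contributions not only from the tails of $[(\mathcal R.1),F]$ but also from $[(\mathcal R.2),F]$ and $[(\mathcal R.4),F]$, and the latter are supported on basic commutators whose inner pair $[s_i(\cdot),s_{i+1}(\cdot)]$ is consecutive --- exactly where the group-$(4)$ relators live --- so the ``everything non-consecutive cancels'' argument of the cyclic case no longer clears the board. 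On top of that, rewriting two-colour triples such as $[s_i(1),s_i(x),s_{i+1}(1)]$ against $[s_i(1),s_{i+1}(1),s_i(x)]$ in a fixed Hall basis requires Jacobi identities that mix the coordinates your congruences are supposed to separate. Asserting that solving this system ``should yield'' the four stated divisibilities is precisely the part of the statement that needs proof, and it is missing.

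The paper's proof is organized so that this computation never has to be done, and the fix for your argument is to use your own weight-4 trick everywhere: the two projections apply to all weights simultaneously, not just to group $(5)$. Under $\UT_n(\ZZ/m_1\ZZ \times \ZZ/m_2\ZZ) \to \UT_n(\ZZ/m_2\ZZ)$ (both colours to $s_i$), every relator of Table \ref{tab:2} not involving $x$ --- including $[s_i(1),s_{i+1}(1),s_i(1)]$ and $[s_i(1),s_{i+1}(1),s_{i+1}(1)s_i(1)^{-1}]$ --- maps onto the Table \ref{tab:1} basis and its exponent dies by Lemma \ref{lem:unique}; under $\UT_n(\ZZ/m_1\ZZ \times \ZZ/m_2\ZZ) \to \UT_n(\ZZ/m_1\ZZ)$ ($s_i(x) \mapsto 1$), the remaining group-$(4)$ relators map to $[s_i,s_{i+1},s_i^{-1}]$ and $[s_i,s_{i+1},s_{i+1}s_i^{-1}]$, the second group-$(5)$ relator to $[s_i,s_{i+1},s_{i+2},s_{i+1}^{-1}]$, and the first difference relator of group $(2)$ to $[s_i,s_j]^{-1}$, so these die by Lemma \ref{lem:unique} as well. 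After the two projections the only surviving relators are of weight 2 and order $m_1$ --- group $(1)$, group $(3)$, and the two relators of group $(2)$ ending in $[s_i(x),s_j(x)]^{-1}$ --- and these are dispatched by exactly your weight-2 lattice argument modulo $\gamma_3(F)$. Reorganized this way (projections first, weight-2 residue second), your proof closes; as written, its hardest step is a conjecture.
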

\begin{proof}
Let $w$ be such a combination of relators of Table \ref{tab:2}. Note that we may assume the exponents of these relators are bounded by the numbers in the right column of the table by Lemma \ref{lem:orders2}. Consider the homomorphism $\UT_n( \ZZ/m_1\ZZ \times \ZZ/m_2\ZZ) \to \UT_n( \ZZ/m_2\ZZ)$, induced by the projection homomorphism of the corresponding rings. The generators $s_i(1|x)$ get mapped into generators $s_i$, and the element $w$, being trivial in the multiplier, gets mapped into a trivial element of the multiplier of $\UT_n( \ZZ/m_2\ZZ)$. By Lemma \ref{lem:unique}, the exponents of the relators in $w$ that do not involve $x$ must thus be zero. Now consider the homomorphism $\UT_n( \ZZ/m_1\ZZ \times \ZZ/m_2\ZZ) \to \UT_n( \ZZ/m_1\ZZ)$. The generators $s_i(x)$ are in the kernel and $s_i(1)$ get mapped into generators $s_i$. Again, the element $w$ is trivial in the multiplier of $\UT_n( \ZZ/m_1\ZZ)$, and what remains of the generating relators of Table \ref{tab:2} are commutators $[s_i, s_j]^{-1}$ (coming from part $(2)$ of Table \ref{tab:2}), $[s_i, s_{i+1}, s_i^{-1}]$ and $[s_i, s_{i+1}, s_{i+1}s_i^{-1}]$ (from $(4)$) and $[s_i, s_{i+1}, s_{i+2}, s_{i+1}^{-1}]$ (from $(5)$). Therefore the exponents of these relators in $w$ must all be zero. Only commutators of weight $2$ and orders dividing $m_1$ in Table \ref{tab:2} may thus appear in $w$. As $w$ is also contained in $[R,F]$, it can be written as a product of conjugates of commutators, the first commutant of which is a relator. Modulo $\gamma_3(F)$, this amounts to
\[
\prod_{i,j} [s_i(1), s_j(1|x)]^{m_2} \cdot \prod_{i,j} ([s_i(1), s_j(1)][s_i(x), s_j(1)]^{-1})^{m_1} \cdot \prod_{i,j} ([s_i(1), s_j(x)][s_i(x), s_j(x)]^{-1})^{m_1}.
\]
Any such product contains only commutators with exponents divisible by $m_1$. Since basic commutators form a basis of the free abelian group $\gamma_2(F)/\gamma_3(F)$, the rest of the exponents must all be zero by the restriction on the exponents appearing in $w$. The proof is complete.
\end{proof}

Taking Theorem \ref{thm:main} in mind, there are $(n-1) + 2\binom{n-2}{2} + 2(n-2) = (n-1)^2$ additional relators of order $m_1$ in Table \ref{tab:2} besides those that make up the multipliers of $\UT_n(\ZZ/m_1\ZZ)$ and $\UT_n(\ZZ/m_2\ZZ)$. We have thus proved the following. 

\begin{theorem} \label{thm:main3}
The Schur multiplier of $\UT_n(\ZZ/m_1\ZZ \times \ZZ/m_2\ZZ)$, where $m_1 | m_2$, is isomorphic to
\[
\MM(\UT_n( \ZZ/m_1\ZZ)) \oplus \MM(\UT_n( \ZZ/m_2\ZZ)) \oplus (\ZZ/m_1\ZZ)^{(n-1)^2}.
\]
\end{theorem}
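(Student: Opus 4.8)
The plan is to assemble the three preceding lemmas exactly as Lemmas \ref{lem:expansion} and \ref{lem:unique} were combined to prove Theorem \ref{thm:main}. Since $F/R$ is a free presentation of $\UT_n(\ZZ/m_1\ZZ \times \ZZ/m_2\ZZ)$, its Schur multiplier is $(R \cap [F,F])/[R,F]$. Lemma \ref{lem:expansionProducts} shows the relators of Table \ref{tab:2} generate this group; Lemma \ref{lem:orders2} bounds the order of each such relator by the entry in its far right column; and Lemma \ref{lem:uniqueProducts} shows these relators are independent modulo $[R,F]$, so that divisibility of exponents forces each generator to attain \emph{exactly} the listed order. Taken together, these three facts force the multiplier to split as the direct sum of the cyclic groups generated by the individual relators. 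The remaining work is pure bookkeeping: partition the relators into three complementary families and read off the isomorphism type of each.

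First I would isolate the relators built solely from the generators $s_i(1)$: the part $(2)$ commutator $[s_i(1), s_j(1)]$, the two weight-three commutators of part $(4)$ in which only $s_i(1)$ and $s_{i+1}(1)$ occur, and the weight-four commutator of part $(5)$ in $s_i(1), s_{i+1}(1), s_{i+2}(1)$. Their orders $m_2$, $m_2$, $\gcd(m_2, \binom{m_2}{2})$, $\gcd(2, m_2)$ and counts $\binom{n-2}{2}$, $n-2$, $n-2$, $n-3$ reproduce precisely the generating data of Table \ref{tab:1} with $m = m_2$, so by Theorem \ref{thm:main} this family spans a copy of $\MM(\UT_n(\ZZ/m_2\ZZ))$.

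Next I would pick out a second family reproducing $\MM(\UT_n(\ZZ/m_1\ZZ))$: the part $(2)$ relator $[s_i(x), s_j(x)][s_i(1), s_j(1)]^{-1}$ of order $m_1$, the two weight-three relators of part $(4)$ of orders $m_1$ and $\gcd(m_1, \binom{m_1}{2})$ that involve $s_i(x)$, and the weight-four relator of part $(5)$ of order $\gcd(2, m_1)$. Matching orders and counts against Table \ref{tab:1} with $m = m_1$ confirms that they span $\MM(\UT_n(\ZZ/m_1\ZZ))$. Everything that survives — the part $(1)$ relators $[s_i(1), s_i(x)]$, the two remaining part $(2)$ families, and both part $(3)$ families — has order exactly $m_1$, and a count of $(n-1) + 2\binom{n-2}{2} + 2(n-2) = (n-1)^2$ relators, contributing the summand $(\ZZ/m_1\ZZ)^{(n-1)^2}$.

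The main obstacle is not any single computation but the verification that this tripartition is clean, so that no relator is double-counted and the direct sum is honest; this rests entirely on the independence furnished by Lemma \ref{lem:uniqueProducts} together with the exact order data of Lemma \ref{lem:orders2}. The only genuinely delicate point is confirming that the chosen ``$m_1$-family'' carries the same order-and-count profile as Table \ref{tab:1}, which requires tracking the $\gcd(m_1, \binom{m_1}{2})$ and $\gcd(2, m_1)$ entries separately in the even and odd cases. Once this is checked, the arithmetic identity $(n-1) + 2\binom{n-2}{2} + 2(n-2) = (n-1)^2$ closes the proof.
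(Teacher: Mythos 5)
Your proposal is correct and follows essentially the paper's own route: the paper likewise combines Lemmas \ref{lem:orders2}, \ref{lem:expansionProducts} and \ref{lem:uniqueProducts} to realize the multiplier as the direct sum of cyclic groups generated by the relators of Table \ref{tab:2}, and then performs exactly your bookkeeping, counting $(n-1)+2\binom{n-2}{2}+2(n-2)=(n-1)^2$ additional order-$m_1$ relators beyond those whose orders and counts match Table \ref{tab:1} for $m=m_2$ and $m=m_1$. Your write-up simply makes the paper's implicit tripartition of the generators explicit.
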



Again, the methods can be applied to the group of unitriangular matrices over the rings $\ZZ/m\ZZ \times \ZZ$ or $\ZZ \times \ZZ$. The presentation of such a group is the same as the one for $\UT_n(\ZZ/m_1\ZZ \times \ZZ/m_2\ZZ)$, only relators $s_i(x)^{m_2}=1$ and respectively $s_i(1)^{m_1} = s_i(x)^{m_1}$ should be removed. The three lemmas are just as those leading up to Theorem \ref{thm:main2}, so Theorem \ref{thm:main3} remains valid for $m_1=m_2=0$ or $m_2 = 0$. This concludes our calculation in the case $r=2$. 

We now demonstrate the transition from $r = 2$ to $r = 3$, as the general case is done identically, only more indices are required. The group of unitriangular matrices $\UT_n(\ZZ/m_1\ZZ \times \ZZ/m_2\ZZ \times \ZZ/m_3\ZZ)$ may be generated by the set $\mathcal S$ of generators of the group $\UT_n(\ZZ/m_1\ZZ \times \ZZ/m_2\ZZ)$ together with a set of additional generators $\{ s_i(y) \mid 1 \leq i \leq n-1 \}$. The elements $1,x,y$ respectively represent $(1,1,1), (0,1,1), (0,0,1)$ in the ring $\ZZ/m_1\ZZ \times \ZZ/m_2\ZZ \times \ZZ/m_3\ZZ$. The essential relators $(\mathcal R.1)$ become $s_i(1)^{m_1} = s_i(x)^{m_1}$, $s_i(x)^{m_2} = s_i(y)^{m_2}$ and $s_i(y)^{m_3} = 1$. Using epimorphisms of multipliers deriving from the projections just as in the proof of  Lemma \ref{lem:uniqueProducts}, we conclude that the corresponding basis of the multiplier is given by Table \ref{tab:3}. Note that relators $(3)$ of the table guarantee that commutators with at least one appearance of $s_i(y)$ are equal to those where only the $s_i(y)$ appear, and these are in turn equal to those commutators in which the generators with $y$ appear only in the last place and the rest are all equal to $s_i(1)$.

\begin{table}[htb]
	\caption{Generators of $\MM(\UT_n(\ZZ/m_1\ZZ \times \ZZ/m_2\ZZ \times \ZZ/m_3\ZZ))$ and their orders.}
	\label{tab:3}
	\begin{center}
		\begin{tabular}{clr}
\toprule
$(1)$	& $[s_i(1), s_i(x)]$				
	& $m_1$ \\
	& $[s_i(1), s_i(y)]$				
	& $m_2$  \\
	& $[s_i(x), s_i(y)]$					
	& $m_2$ \\ \midrule

$(2)$ & $[s_i(1), s_j(1)]				$
	& $m_3$ \\
	& $[s_i(x), s_j(x)][s_i(1), s_j(1)]^{-1}		$
	& $m_1$ \\ 
	& $[s_i(y), s_j(y)][s_i(1), s_j(1)]^{-1}		$
	& $m_2$ \\
	& $[s_i(x), s_j(1)][s_i(x), s_j(x)]^{-1}		$
	& $m_1$ \\
	& $[s_i(1), s_j(x)][s_i(x), s_j(x)]^{-1}		$
	& $m_1$ \\  
	& $[s_i(y), s_j(1)][s_i(y), s_j(y)]^{-1}		$
	& $m_2$ \\
	& $[s_i(1), s_j(y)][s_i(y), s_j(y)]^{-1}		$
	& $m_2$ \\ 
	& $[s_i(y), s_j(x)][s_i(y), s_j(y)]^{-1}		$
	& $m_2$ \\
	& $[s_i(x), s_j(y)][s_i(y), s_j(y)]^{-1}		$
	& $m_2$ \\  \midrule

$(3)$	& $[s_i(1), s_{i+1}(x)]  [s_i(x), s_{i+1}(1)]^{-1}			$
	& $m_1$ \\ 
	& $[s_i(x), s_{i+1}(x)]  [s_i(x), s_{i+1}(1)]^{-1}			$
	& $m_1$ \\
	& $[s_i(y), s_{i+1}(x)]  [s_i(y), s_{i+1}(1)]^{-1}			$
	& $m_2$ \\ 
	& $[s_i(1), s_{i+1}(y)]  [s_i(y), s_{i+1}(1)]^{-1}			$
	& $m_2$ \\
	& $[s_i(x), s_{i+1}(y)]  [s_i(y), s_{i+1}(1)]^{-1}			$
	& $m_2$ \\ 
	& $[s_i(y), s_{i+1}(y)]  [s_i(y), s_{i+1}(1)]^{-1}			$
	& $m_2$ \\ \midrule

$(4)$ & $[s_i(1), s_{i+1}(1), s_i(1)]			$
	& $m_3$ \\
	& $[s_i(1), s_{i+1}(1), s_{i+1}(1)s_i(1)^{-1}]$
	& $\gcd(m_3, \binom{m_3}{2})$ \\
	& $[s_i(1), s_{i+1}(1), s_i(1)s_i(x)^{-1}]	$
	& $m_1$ \\
	& $[s_i(1), s_{i+1}(1), s_i(1)s_{i+1}(1)^{-1} (s_i(x)s_{i+1}(x)^{-1})^{-1}]$ 
	& $\gcd(m_1, \binom{m_1}{2})$  \\
	& $[s_i(1), s_{i+1}(1), s_i(1)s_i(y)^{-1}]	$
	& $m_2$ \\
	& $[s_i(1), s_{i+1}(1), s_i(1)s_{i+1}(1)^{-1} (s_i(y)s_{i+1}(y)^{-1})^{-1}]$ 
	& $\gcd(m_2, \binom{m_2}{2})$  \\ \midrule

$(5)$ & $[s_i(1), s_{i+1}(1), s_{i+2}(1), s_{i+1}(1)]$
	& $\gcd(2,m_3)$ \\
	& $[s_i(1), s_{i+1}(1), s_{i+2}(1), s_{i+1}(1)s_{i+1}(x)^{-1}]$
	& $\gcd(2,m_1)$ \\
	& $[s_i(1), s_{i+1}(1), s_{i+2}(1), s_{i+1}(1)s_{i+1}(y)^{-1}]$
	& $\gcd(2,m_2)$ \\ \bottomrule
		\end{tabular}		
	\end{center}
\end{table}

In the general case, the unitriangular group $\UT_n(\prod_{i=1}^r \ZZ/m_i\ZZ)$ may be generated by the set $\mathcal S = \{ s_i(x_k) \mid 1 \leq i \leq n-1, 1 \leq k \leq r \}$, where the element $x_k \in  \prod_{i=1}^r \ZZ/m_i\ZZ$ represents the $r$-tuple consisting of $0$ in the first $k-1$ places and of $1$ in the rest. Inductively, the corresponding table that determines the basis of the multiplier is partitioned into five parts just as Table \ref{tab:3} is, and on each step, additional relators are added. These are collected in Table \ref{tab:add}.

\begin{table}[htb]
\caption{Additional generators of $\MM(\UT_n(\prod_{i=1}^r \ZZ/m_i\ZZ))$.}
\label{tab:add}	
	\begin{center}
		\begin{tabular}{clr}
\toprule
$(1)$	& $[s_i(x_k), s_i(x_r)]$ \\ 

$(2)$ & $[s_i(x_r), s_j(x_r)][s_i(x_1), s_j(x_1)]^{-1}$ \\ 

	& $[s_i(x_{k_1}), s_j(x_{k_2})][s_i(x_r),s_j(x_r)]^{-1}$
	& $r \in \{ k_1, k_2 \}$ \\ 

$(3)$	& $[s_i(x_{k_1}), s_{i+1}(x_{k_2})][s_i(x_r), s_{i+1}(x_1)]$
	& $k_2 \neq 1$, $r \in \{ k_1, k_2 \}$ \\ 

$(4)$ & $[s_i(x_1), s_{i+1}(x_1), s_i(x_1)s_i(x_r)^{-1}]$ \\

	& $[s_i(x_1), s_{i+1}(x_1), s_i(x_1)s_{i+1}(x_1)^{-1} (s_i(x_r)s_{i+1}(x_r)^{-1})^{-1}]$ \\ 

$(5)$ & $[s_i(x_1), s_{i+1}(x_1), s_{i+2}(x_1), s_{i+1}(x_1)s_{i+1}(x_r)^{-1}]$\\ \bottomrule
		\end{tabular}		
	\end{center}
\end{table}

All in all, the number of these additional generators on step $r$ not counting the ones that make up the multiplier of the unitriangular group over $\prod_{i=1}^{r-1} \ZZ/m_i\ZZ$ and over $\ZZ/m_r\ZZ$ equals $(r-1)(n-1) + 2(r-1)\binom{n-2}{2} + 2(r-1)(n-2) = (r-1)(n-1)^2$. They are all of order $m_{r-1}$. Inductively, we have proven the following.

\begin{theorem}
\label{t:main_all}
The Schur multiplier of $\UT_n(\prod_{i=1}^r \ZZ/m_i\ZZ)$, where $m_i | m_{i+1}$, is isomorphic to
 	\[
		\bigoplus_{i=1}^r \MM(\UT_n( \ZZ/m_i\ZZ)) \oplus \bigoplus_{i=1}^{r-1} (\ZZ/{m_i}\ZZ)^{i(n-1)^2}.
	\]
\end{theorem}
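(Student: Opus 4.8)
The plan is to argue by induction on $r$, reducing everything to the single inductive identity
\[
\MM(\UT_n(\prod_{i=1}^{r} \ZZ/m_i\ZZ)) \cong \MM(\UT_n(\prod_{i=1}^{r-1} \ZZ/m_i\ZZ)) \oplus \MM(\UT_n(\ZZ/m_r\ZZ)) \oplus (\ZZ/m_{r-1}\ZZ)^{(r-1)(n-1)^2},
\]
from which the closed formula follows by a routine telescoping of direct sums: the block created at step $r$ is exactly the $i = r-1$ summand $(\ZZ/m_{r-1}\ZZ)^{(r-1)(n-1)^2}$ of $\bigoplus_{i=1}^{r-1}(\ZZ/m_i\ZZ)^{i(n-1)^2}$, while the $\MM(\UT_n(\ZZ/m_r\ZZ))$ term completes the first direct sum. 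The base cases $r=1$ and $r=2$ are Theorems \ref{thm:main} and \ref{thm:main3}. For the inductive step I present $\UT_n(\prod_{i=1}^r \ZZ/m_i\ZZ)$ on the generators $s_i(x_k)$, where $s_i(x_1),\dots,s_i(x_{r-1})$ present the unitriangular group over $\prod_{i=1}^{r-1}\ZZ/m_i\ZZ$ and the fresh generators $s_i(x_r)$ enter through the essential relators $s_i(x_{r-1})^{m_{r-1}} = s_i(x_r)^{m_{r-1}}$ and $s_i(x_r)^{m_r} = 1$, and then compute $(R\cap[F,F])/[R,F]$ via Hopf's formula.

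I would then establish the three lemmas in exact analogy with the case $r=2$. For the order bounds (generalizing Lemma \ref{lem:orders2}) I assemble the candidate generators into the same five blocks as Table \ref{tab:3} and bound each order in $F/[R,F]$. No genuinely new commutator identity is needed: every manipulation is one already carried out in Lemmas \ref{lem:orders} and \ref{lem:orders2}, and the sole novelty is the extra symbol $x_r$. The crucial enabling observation --- recorded in the block $(3)$ relators --- is that any commutator in which $s_i(x_r)$ occurs can be rewritten, modulo $R$, as one in which $x_r$ sits in a single final slot and all other entries are $s_i(x_1)$; this collapse keeps the generating set finite and the five-block structure stable as $r$ increases.

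The generation lemma (generalizing Lemma \ref{lem:expansionProducts}) then follows by expanding an arbitrary $w \in R\cap[F,F]$ as a product of conjugates of relators and rewriting each in terms of the tabulated generators, exactly as before. The heart of the matter is the independence lemma (generalizing Lemma \ref{lem:uniqueProducts}), for which I would again exploit the coordinate projections of the ring, each inducing a homomorphism of multipliers. The projection dropping the last coordinate sends $s_i(x_r)$ to the identity and, via the inductive hypothesis applied to $\UT_n(\prod_{i=1}^{r-1}\ZZ/m_i\ZZ)$, constrains the exponents of the relators not involving $x_r$; the projection onto the last factor $\ZZ/m_r\ZZ$ collapses all $s_i(x_k)$ to a single generator and, together with Lemma \ref{lem:unique}, disposes of the relators that survive as pure commutators; and the remaining $x_r$-involving relators, which are all weight-two commutators of order dividing $m_{r-1}$, are separated in $\gamma_2(F)/\gamma_3(F)$ using the basis of basic commutators, precisely as in Lemma \ref{lem:uniqueProducts}.

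Finally I count the generators created at step $r$, beyond those assembling $\MM(\UT_n(\prod_{i=1}^{r-1}\ZZ/m_i\ZZ))$ and $\MM(\UT_n(\ZZ/m_r\ZZ))$. The relators of Table \ref{tab:add} pair the new symbol $x_r$ against each of the $r-1$ previously introduced symbols, so each contribution from the $r=2$ tally is multiplied by $r-1$, giving $(r-1)(n-1) + 2(r-1)\binom{n-2}{2} + 2(r-1)(n-2) = (r-1)(n-1)^2$ new generators, all of order $m_{r-1}$. This establishes the inductive identity and hence the theorem. I expect the main obstacle to be not any single hard computation but the uniform bookkeeping: checking that the order bounds and the projection-based independence argument persist across the steadily growing system of relators, and in particular that the block $(3)$ collapse genuinely normalizes every admissible commutator into the form in which the new symbol appears only in its last slot.
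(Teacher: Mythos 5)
Your proposal is correct and follows essentially the same route as the paper: induction on $r$ via the Biss--Dasgupta presentation with generators $s_i(x_k)$, the same three-lemma scheme (order bounds, generation, independence via ring projections), the same five-block table with the block-(3) normalization putting $x_r$ into the last slot, and the same count $(r-1)(n-1)+2(r-1)\binom{n-2}{2}+2(r-1)(n-2)=(r-1)(n-1)^2$ of new order-$m_{r-1}$ generators telescoping to the stated formula. The only cosmetic difference is that you apply the drop-last-coordinate projection before the collapse onto $\ZZ/m_r\ZZ$ in the independence step (the paper's Lemma \ref{lem:uniqueProducts} does the reverse); since the pure relators' exponents are bounded by $m_r$ but that first projection only constrains them modulo $m_{r-1}$, the two resulting constraints must be combined rather than read off in sequence, which is harmless but worth stating explicitly.
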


Using the above, we can also determine the Schur multiplier of the ``limit'' unitriangular groups with respect to $n$. Somewhat in the spirit of algebraic $K$-theory, the {\em infinite} unitriangular group $\UT(R)$ of a unital ring $R$ may be defined as the direct limit of the groups $\UT_n( R)$ with upper-left embeddings $\UT_n(R) \to \UT_{n+1}(R)$. Applying the Direct Limit Argument \cite{Beyl82}, the following is straightforward from Theorem \ref{t:main_all}.

\begin{corollary} \label{cor:directlimit}
The Schur multiplier of $\UT(\ZZ/m\ZZ)$ is isomorphic to 
\[
\bigoplus_{i \in \mathbb{N}} \ZZ/m\ZZ \text{ for odd $m$,} \qquad \bigoplus_{i \in \mathbb{N}} \ZZ/2\ZZ \oplus \ZZ/{\textstyle \frac{m}{2}}\ZZ \oplus \ZZ/m\ZZ \text{ for even $m$.}
\]
The Schur multiplier of the infinite unitriangular group over direct products of such rings is isomorphic to the product of multipliers of the infinite unitriangular group over the corresponding factors.
\end{corollary}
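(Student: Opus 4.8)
The plan is to invoke the Direct Limit Argument of \cite{Beyl82}, which asserts that the Schur multiplier commutes with direct limits of groups, and then to read off the limit of the explicit multipliers furnished by Theorems \ref{thm:main} and \ref{t:main_all}. Since $\UT(R) = \varinjlim_n \UT_n(R)$ along the upper-left embeddings, one immediately obtains
\[
\MM(\UT(R)) \cong \varinjlim_n \MM(\UT_n(R)),
\]
so the whole corollary reduces to computing this colimit of abelian groups for $R = \ZZ/m\ZZ$ and for finite products of such rings.

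The crux is to pin down the transition maps, and this is the main obstacle. The upper-left embedding $\UT_n(R) \to \UT_{n+1}(R)$ sends $s_i$ to $s_i$ for $1 \le i \le n-1$, hence carries every commutator occurring as a generator in Table \ref{tab:1} (resp.\ Table \ref{tab:3}) to the commutator bearing the same indices in $\UT_{n+1}(R)$, which is again one of the listed generators and has the same prescribed order. By the uniqueness lemmas (Lemma \ref{lem:unique}, resp.\ Lemma \ref{lem:uniqueProducts}), these generators are independent of exactly those orders at every level, so each induced map $\MM(\UT_n(R)) \to \MM(\UT_{n+1}(R))$ is a split monomorphism onto a direct summand. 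It is precisely this compatibility of the explicit bases under the embeddings that must be made careful; once it is in place, the colimit is simply the increasing union, that is, the direct sum of all the cyclic summands that appear for some $n$, and the remainder is bookkeeping.

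It then remains to count. For $R = \ZZ/m\ZZ$, the four families of generators in Table \ref{tab:1} have ranks $\binom{n-2}{2}$, $n-2$, $n-2$ and $n-3$, each tending to infinity with $n$, so each contributes a countable direct sum of cyclic groups of its order to the limit. Reading the orders off Table \ref{tab:1} as in Theorem \ref{thm:main}—for odd $m$ all nontrivial summands have order $m$, while for even $m$ they split into orders $m$, $\frac{m}{2}$ and $2$—the union over $n$ yields $\bigoplus_{i \in \mathbb N} \ZZ/m\ZZ$ for odd $m$ and $\bigoplus_{i \in \mathbb N}(\ZZ/2\ZZ \oplus \ZZ/\frac{m}{2}\ZZ \oplus \ZZ/m\ZZ)$ for even $m$, which is the asserted formula.

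For a finite product of such rings, Theorem \ref{t:main_all} gives
\[
\MM\bigl(\UT_n(\textstyle\prod_{i=1}^r \ZZ/m_i\ZZ)\bigr) \cong \bigoplus_{i=1}^r \MM(\UT_n(\ZZ/m_i\ZZ)) \oplus \bigoplus_{i=1}^{r-1} (\ZZ/m_i\ZZ)^{i(n-1)^2},
\]
and passing to the limit in $n$ turns each exponent $i(n-1)^2$ into a countable direct sum, so the extra part becomes $\bigoplus_{i=1}^{r-1} \bigoplus_{j \in \mathbb N} \ZZ/m_i\ZZ$. The final point is absorption: each $\MM(\UT(\ZZ/m_i\ZZ))$ already contains $\bigoplus_{j \in \mathbb N} \ZZ/m_i\ZZ$ as a direct summand, so the isomorphism $\bigoplus_{\mathbb N} A \oplus \bigoplus_{\mathbb N} A \cong \bigoplus_{\mathbb N} A$ collapses the extra terms into the factor multipliers, leaving $\bigoplus_{i=1}^r \MM(\UT(\ZZ/m_i\ZZ))$, the product of the multipliers of the infinite unitriangular group over the factors.
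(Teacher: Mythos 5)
Your proof is correct and takes essentially the same route as the paper, whose entire argument is one sentence invoking the Direct Limit Argument of \cite{Beyl82} and declaring the result straightforward from Theorem \ref{t:main_all}. Your verification that the upper-left embeddings induce split monomorphisms carrying basis generators to basis generators (via the uniqueness lemmas), together with the final absorption $\bigoplus_{i \in \mathbb{N}} A \oplus \bigoplus_{i \in \mathbb{N}} A \cong \bigoplus_{i \in \mathbb{N}} A$, is exactly the bookkeeping the paper leaves implicit.
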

\section{Covering groups over prime fields}
\label{s:covers}
\noindent In this section, we determine the covering groups of the group of unitriangular matrices over the field $\FF_p$ with the help of Theorem \ref{thm:main}. Recall that a {\em covering group} of a given group $G$ is a stem extension (i.e. the kernel is contained in the center and the derived subgroup) with the quotient group $G$ that is of maximal order among such extensions \cite{Kar87}. It is well-known that all covering groups of $G$, presented as $F/R$, are homomorphic images of the group $F/[R,F]$. Moreover, any one of them is obtained as a quotient $F/C$, where $C$ is the torsion-free part of $R/[R,F]$, or equivalently the complement of the multiplier $\MM(G) \cong (R \cap [F,F])/[R,F]$ in $R/[R,F]$.

Invoking the minimal presentations of the previous section, let $\UT_n(\FF_p)$ be given by the set of generators $\mathcal S = \{ s_i \mid 1 \leq i \leq n-1 \}$ and the set $\mathcal R$ of relators $s_i^m$, $[s_i, s_j]$, $[s_i, s_{i+1}, s_i]$, $[s_i, s_{i+1}, s_{i+1}]$, and $[[s_i, s_{i+1}], [s_{i+1}, s_{i+2}]]$ for suitable indices $i,j$. Taking Theorem \ref{thm:main} into account, we denote $\alpha_{i,j} = [s_i, s_j]$, $\beta_{i,0} = [s_i, s_{i+1}, s_i]$, $\beta_{i,1} = [s_i, s_{i+1}, s_is_{i+1}^{-1}]$, and $\gamma_i = [[s_i, s_{i+1}], [s_{i+1}, s_{i+2}]]$. The group $R/[R,F]$ is generated by these commutators and powers $s_i^p$, and the multiplier corresponds precisely to the subgroup generated by commutators alone. The subgroup $\langle s_i^p \mid 1 \leq i \leq n-1 \rangle \leq R/[R,F]$ is thus a free complement of the multiplier. Moreover, any free complement can be, by means of Gaussian elimination, generated by a set of the form
\begin{equation}
\label{eq:relators_covers1}
	\{ s_i^p \cdot \alpha_{i,j}^{a_{i,j}} \beta_{i,j}^{b_{i,j}} \gamma_i^{c_i} \mid 1 \leq i \leq n-1 \}
\end{equation}
for some nonnegative integers $a_{i,j}, b_{i,0} < p$, $b_{i,1} < p$ for odd $p$ and $0$ otherwise, $c_i < 2$ for even $p$ and $0$ otherwise. Any such complement provides a covering group of $\UT_n(\FF_p)$, given by the set of generators $\mathcal S$ and relators 
\begin{equation}
\label{eq:relators_covers}
	\{ [\alpha_{i,j}, \mathcal S] \} \cup \{ [\beta_{i,j}, \mathcal S] \} \cup \{ [\gamma_{i}, \mathcal S] \} \cup \{ [s_i^p, \mathcal S] \} \cup \eqref{eq:relators_covers1},
\end{equation}
and all covering groups of $\UT_n(\FF_p)$ are obtained this way. Note that relations $[s_i^p, \mathcal S]$ are actually consequences of the other four types of relations.

Any two covering group of a given group are known to be isoclinic \cite{Beyl82}. {\em Isoclinism} is an equivalence relation introduced by P. Hall, and the equivalence classes are called {\em families}. Each family contains {\em stem groups}, that is, groups $G$ satisfying $Z(G)\le [G,G]$. Stem groups in a given family have the same order, which is the minimal order of all groups in the family. We show by means of determining the center that covering groups of unitriangular groups possess this property.

\begin{proposition}
\label{p:center_covers}
The center of any covering group of $\UT_n(\FF_p)$ is generated by $\MM(\UT_n(\FF_p))$ when $n = 3$, or $4$ for $p=2$, and by $\MM(\UT_n(\FF_p)) \cup \{ [s_1, s_2, \dots, s_{n-1}] \}$ whenever $n \geq 4$, resp. $5$.
\end{proposition}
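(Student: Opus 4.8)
The plan is to reduce the whole statement to a single dichotomy: whether the long commutator $\ell := [s_1, s_2, \ldots, s_{n-1}]$ is central in the covering group. Write $G = F/C$ for an arbitrary covering group and $\pi \colon G \to \UT_n(\FF_p)$ for the canonical projection, whose kernel is the central subgroup $\MM := \MM(\UT_n(\FF_p))$. Since the extension is stem, $\MM \le Z(G)$, and since central elements map to central elements, $\pi(Z(G)) \le Z(\UT_n(\FF_p))$. The center of $\UT_n(\FF_p)$ is the cyclic group $I + \FF_p E_{1,n}$, generated by the image $\bar\ell$ of $\ell$. Because $\ker \pi = \MM \le Z(G)$ we have $Z(G) = \pi^{-1}(\pi(Z(G)))$, so $\pi(Z(G))$ is either trivial or all of $\langle \bar\ell\rangle$; correspondingly $Z(G)$ is either $\MM$ or $\langle \MM, \ell\rangle$, the latter occurring exactly when $\ell \in Z(G)$ (any two lifts of $\bar\ell$ differ by a central element of $\MM$). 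Everything therefore rests on deciding centrality of $\ell$.

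First I would handle the generic range. Centrality of $\ell$ is equivalent to $[\ell, s_k] = 1$ for every generator $s_k$. Each such commutator has weight $n$ and so lies in $\gamma_n(G)$; as $\UT_n(\FF_p)$ has nilpotency class $n-1$, we get $\gamma_n(G) \le \ker \pi = \MM$. The key claim is then that $\gamma_n(G) = 1$ as soon as $n > 3$ (for odd $p$) or $n > 4$ (for $p = 2$). To prove it I would pass to the associated graded Lie ring $\mathrm{gr}(G)$, where $\mathrm{gr}(\MM)$ sits as a central ideal. The explicit generators of $\MM$ supplied by Theorem \ref{thm:main} --- the $\alpha_{i,j}$, $\beta_{i,0}$, $\beta_{i,1}$ and $\gamma_i$ --- all have weight at most $4$, and at most $3$ when $p$ is odd; moreover their number matches $\dim_{\FF_p}\MM$, so they form a basis, and the degreewise analysis already carried out in the proof of Lemma \ref{lem:unique} shows their leading terms are linearly independent in the lower central factors. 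Hence $\mathrm{gr}(\MM)$ is concentrated in degrees $\le 4$ (resp. $\le 3$), giving $\MM \cap \gamma_5(G) = 1$ (resp. $\MM \cap \gamma_4(G) = 1$). In the stated range this forces $\gamma_n(G) \le \MM \cap \gamma_n(G) = 1$, so $\ell$ is central and $Z(G) = \langle \MM, \ell\rangle$.

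It remains to rule out centrality of $\ell$ in the two boundary cases, which are precisely those in which $\MM$ still carries a generator of weight $n$. For $n = 3$ one computes directly $[\ell, s_1] = [s_1, s_2, s_1] = \beta_{1,0}$, a nontrivial element of $\MM$, so $\ell \notin Z(G)$ and $Z(G) = \MM$. For $n = 4$ with $p = 2$ the obstruction is the weight-$4$ generator $\gamma_1$: the identity $[[s_i, s_{i+1}], [s_{i+1}, s_{i+2}]] = [s_i, s_{i+1}, s_{i+2}, s_{i+1}]^{-1}$ from the proof of Lemma \ref{lem:orders} gives $[\ell, s_2] = [s_1, s_2, s_3, s_2] = \gamma_1^{-1} \ne 1$, again forcing $Z(G) = \MM$. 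For odd $p$ the element $\gamma_1$ is trivial, which is exactly why the threshold drops from $n = 4$ to $n = 3$.

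The hard part is the generic computation $\gamma_n(G) = 1$, that is, showing the covering group has nilpotency class exactly $n-1$ rather than $n$. This is where the full force of Theorem \ref{thm:main} is needed: not merely that $\MM$ is generated in low weight, but that these low-weight commutators form a basis with independent leading terms, so that no element of the multiplier has weight $\ge 5$ (resp. $\ge 4$ for odd $p$). The dimension count against $\dim_{\FF_p}\MM$ is what makes this rigorous, and it simultaneously explains why the two exceptional cases are exactly those admitting a surviving weight-$n$ generator of the multiplier.
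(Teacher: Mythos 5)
Your reduction of the proposition to the single dichotomy of whether $\lambda=[s_1,s_2,\ldots,s_{n-1}]$ is central, and your treatment of the two boundary cases ($n=3$ via $\beta_{1,0}\neq 1$, and $n=4$, $p=2$ via $\gamma_1\neq 1$), coincide with the paper's proof. The divergence --- and the problem --- is in the generic range. The paper settles $n\ge 5$ for all $p$ at once by a direct computation: the Hall--Witt identity gives $[\lambda,s_i]=1$ for $1\le i\le n-3$, and since $\lambda\equiv[s_{n-1},s_{n-2},\ldots,s_1]^{\pm 1}\pmod R$ and commutation is insensitive to changing $\lambda$ by an element of $R$ (which is central in the cover), the same argument applied to the reversed commutator yields $[\lambda,s_{n-2}]=[\lambda,s_{n-1}]=1$. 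You instead want $\gamma_n(G)=1$ via the claim that $\mathrm{gr}(\MM)$ is concentrated in degrees $\le 4$ (resp.\ $\le 3$), and here your appeal to ``the degreewise analysis already carried out in the proof of Lemma~\ref{lem:unique}'' fails for $p=2$. That lemma performs a leading-term analysis only at weights $2$ and $3$; the weight-$4$ generators $\gamma_i$ are handled there by an entirely different device (induction on $n$ resting on a machine computation of $\MM(\UT_4(\ZZ/2\ZZ))\cong(\ZZ/2\ZZ)^4$), precisely because describing $[R,F]$ modulo $\gamma_5(F)$ by hand is what the paper is avoiding. Moreover, what your argument needs is strictly stronger than the lemma's conclusion: not merely that $\prod_i\gamma_i^{d_i}\in[R,F]$ forces the $d_i$ to be even, but that $\prod_i\gamma_i^{d_i}\in\gamma_5(F)[R,F]$ does --- this is what $\MM\cap\gamma_5(G)=1$ unwinds to, using $[F,F]\cap C\le[R,F]$. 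For $n=5$ that stronger statement is exactly the assertion that the covering group has class $4$ rather than $5$, i.e.\ the proposition itself; so as written the $p=2$, $n\ge 5$ case of your proof is circular, or at best has a hole precisely where the work lies.

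For odd $p$ your generic argument is salvageable: there one only needs leading-term independence at weights $2$ and $3$, and the computations in the proof of Lemma~\ref{lem:unique} (the image of $[R,F]$ modulo $\gamma_3(F)$ and modulo $\gamma_4(F)$, compared against the basic-commutator bases) establish exactly that --- once you add the transfer step you glossed over. Independence must hold in the lower central factors of $G=F/C$, not of $F/[R,F]$, and the reduction uses the observation that $[F,F]\cap C\le[R,F]$ (an element of the complement lying in $[F,F]$ represents an element of $C\cap\MM=1$), so that membership in $\gamma_{w+1}(G)$ pulls back to membership in $\gamma_{w+1}(F)[R,F]$, to which the lemma's weight-$2$ and weight-$3$ computations apply verbatim. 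So: your dichotomy and exceptional cases are correct, the odd-$p$ generic case is fixable with this added care, but the $p=2$, $n\ge 5$ case needs a genuinely different argument --- for instance the paper's Hall--Witt-plus-reversal computation, which is uniform in $p$ and requires no graded analysis at weight $4$.
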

\begin{proof}
Let $H$ be a covering group of $\UT_n(\FF_p)$ and $w \in Z(H)$. Regard $w$ as an element $\tilde w \in F$, a word in the generators $\mathcal S$. We have $[\tilde w, \mathcal S] \subseteq R$, so $\tilde w$ is a central element of $\UT_n(\FF_p)$. The center of the unitriangular group is generated by the right-upper-most commutator $\lambda = [s_1, s_2, \dots, s_{n-1}]$. The element $\tilde w$, and so $w$, may thus be written as a product $\lambda^i r$, where $r \in R$. As $R$ is generated modulo $[R,F]$ by the set $\{ s_i^p, \alpha_{i,j}, \beta_{i,j}, \gamma_i \}$ and elements of the complement of the multiplier are relations in $H$, we conclude that $R$ is contained in the derived subgroup of $H$ and generated by the basis of the multiplier of $\UT_n(\FF_p)$. We now determine when such a product $w = \lambda^i r$ is central in $H$. This clearly reduces to the question of when $\lambda$ is central in $H$. When $n = 3$, the commutator $[s_1, s_2]$ is certainly not central, since we have $[s_1, s_2, s_1] = \beta_{1,0} \neq 1$. When $n = 4$, we have $[s_1, s_2, s_3, s_1] = [s_1, s_2, s_3, s_3] = 1$ by the Hall-Witt identity, but the commutator $[s_1, s_2, s_3, s_2]$ need not be trivial by Theorem \ref{thm:main}. If $p$ is odd, it is and $\lambda$ is central in $H$; if $p = 2$, this shows that $\lambda$ is not central in $H$. Now let $n \geq 5$. Again invoking the Hall-Witt identity, we see that $[\lambda, s_i] = 1$ whenever $1 \leq i \leq n-3$, and since $\lambda \equiv [s_{n-1}, s_{n-2}, \dots, s_1]^{\pm 1} \pmod{R}$, we also have $[\lambda, s_{n-2}] = [\lambda, s_{n-1}] = 1$, hence $\lambda$ is indeed central. 
\end{proof}

\begin{corollary}
\label{c:stem_covers}
Any covering group of $\UT_n(\FF_p)$ is a stem group.
\end{corollary}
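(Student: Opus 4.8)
The plan is to read the result straight off Proposition \ref{p:center_covers}. Recall that a group is a \emph{stem group} precisely when its center is contained in its derived subgroup, so for a covering group $H$ of $\UT_n(\FF_p)$ it suffices to verify the single inclusion $Z(H) \le [H,H]$. My strategy is therefore to produce an explicit generating set for $Z(H)$ and check that each of its members is a commutator.

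First I would invoke Proposition \ref{p:center_covers}, which gives exactly such a generating set: the center of $H$ is generated by (the image of) the multiplier $\MM(\UT_n(\FF_p))$, together with the iterated commutator $\lambda = [s_1, s_2, \dots, s_{n-1}]$ in the cases $n \geq 4$ (resp.\ $n \geq 5$ for $p = 2$). Inside $H = F/C$ the multiplier is realized as the subgroup generated by the commutator relators $\alpha_{i,j} = [s_i, s_j]$, $\beta_{i,0} = [s_i, s_{i+1}, s_i]$, $\beta_{i,1} = [s_i, s_{i+1}, s_i s_{i+1}^{-1}]$, and $\gamma_i = [[s_i, s_{i+1}], [s_{i+1}, s_{i+2}]]$. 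The decisive observation is then immediate: every one of these generators, as well as $\lambda$, is an iterated commutator in the generating set $\mathcal S$, and hence lies in the derived subgroup $[H,H]$. It follows that each generator of $Z(H)$ already belongs to $[H,H]$, so $Z(H) \le [H,H]$ and $H$ is a stem group.

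Since all the substance is concentrated in Proposition \ref{p:center_covers}, I expect no real obstacle to remain; the corollary is a formal consequence of the fact that the generators of the center are commutators. The only point worth recording is that the image of $R$ in $H$ does land in $[H,H]$, which was already observed in the proof of the proposition (where $R$ is shown to be contained in the derived subgroup of $H$ and generated by the basis of the multiplier). This confirms that the multiplier part of $Z(H)$ sits in $[H,H]$, completing the argument.
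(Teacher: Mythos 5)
Your proof is correct and is exactly the argument the paper intends: the corollary is read off from Proposition \ref{p:center_covers}, since every generator of the center of a covering group $H$ --- the commutators $\alpha_{i,j}$, $\beta_{i,j}$, $\gamma_i$ spanning the image of $\MM(\UT_n(\FF_p))$, and (when present) $\lambda = [s_1, s_2, \dots, s_{n-1}]$ --- is a commutator word in the generators, hence lies in $[H,H]$, giving $Z(H) \le [H,H]$. The paper leaves this step implicit, and your write-up supplies precisely the missing routine verification.
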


We now consider the converse of Corollary \ref{c:stem_covers}. Let $H$ be a covering group of $\UT_n(\FF_p)$, given by the set of generators $\mathcal S$ and relators \eqref{eq:relators_covers} as above, and let $K$ be any group in the isoclinism family $\Phi$ determined by $H$. We ask whether or not $K$ must also be a covering group of the unitriangular group. Certainly, $K$ must be of the same order as $H$ is for the question to make sense, which means that $K$ must also be a stem group of the family $\Phi$. As the groups $H$ and $K$ have isomorphic derived subgroups, their centers are also isomorphic. Moreover, the isomorphism $H/Z(H) \cong K/Z(K)$ provides a generating set $K = \langle s_i, Z(K) \rangle$ with the relation $s_i^p \in Z(K)$. By Proposition \ref{p:center_covers}, this means that the elements
\begin{equation}
\label{eq:relators_isocline}
	s_i^p \cdot \lambda^{\tilde l_i} \alpha_{i,j}^{\tilde a_{i,j}} \tilde \beta_{i,j}^{b_{i,j}} \tilde \gamma_i^{c_i}
\end{equation}
are trivial in $K$ for some nonnegative integers $\tilde l_i, \tilde a_{i,j}, \tilde b_{i,0} < p$, $\tilde b_{i,1} < p$ for odd $p$ and $0$ otherwise, $\tilde c_i < 2$ for even $p$ and $0$ otherwise. The group $K$ is thus given by the set of generators $\mathcal S$ subject to the set of relations 
\[
	\{ [\alpha_{i,j}, \mathcal S] \} \cup \{ [\beta_{i,j}, \mathcal S] \} \cup \{ [\gamma_{i}, \mathcal S] \} \cup \{ [s_i^p, \mathcal S] \} \cup \eqref{eq:relators_isocline}.
\]
These differ from \eqref{eq:relators_covers} by the extra element $\lambda$. 

When $\tilde l_i = 0$ for all $i$, $K$ is indeed a covering group of $\UT_n(\FF_p)$. In light of Proposition \ref{p:center_covers}, this is always true for $n = 3$, so any stem group of the given isoclinism family is a covering group of $\UT_n(\FF_p)$. Moreover, as these covering groups are of small enough order, namely $|\MM(\UT_3(\FF_p))| \cdot |\UT_3(\FF_p)| = p^5$ by Theorem \ref{thm:main}, it is not difficult to classify the nonisomorphic representatives among them. This has been done in \cite{Jam80}. The covering groups of $\UT_3(\FF_p)$ correspond precisely to the stem members of the family $\Phi_6$.

\begin{proposition}
\label{p:number}
	The covering groups of $\UT_3(\FF_p)$ are precisely the stem groups of some isoclinism family. The number of nonisomorphic representatives of them equals $3$ for $p=2$, $7$ for $p=3$, and $p+7$ whenever $p > 3$.
\end{proposition}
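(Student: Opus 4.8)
The plan is to split the proposition into its structural assertion — that the covering groups of $\UT_3(\FF_p)$ are exactly the stem members of one isoclinism family — and its enumerative assertion, reducing the latter to the classification of groups of order $p^5$.

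For the structural part I would assemble three ingredients already in place. Corollary \ref{c:stem_covers} gives that every covering group of $\UT_3(\FF_p)$ is a stem group, and, since any two covering groups of a fixed group are isoclinic \cite{Beyl82}, all of them lie in a single isoclinism family $\Phi$; hence the covering groups sit among the stem members of $\Phi$. For the reverse inclusion I would invoke the discussion preceding the proposition: an arbitrary stem group $K$ of $\Phi$ has a presentation on $\mathcal S$ with relators \eqref{eq:relators_isocline}, which differ from the covering-group relators \eqref{eq:relators_covers} only in the exponents $\tilde l_i$ of the top commutator $\lambda$. By Proposition \ref{p:center_covers}, for $n = 3$ the center of a covering group is generated by $\MM(\UT_3(\FF_p))$ alone, so $\lambda$ is not among its central generators and each $\tilde l_i$ must vanish; then \eqref{eq:relators_isocline} reduces to \eqref{eq:relators_covers} and $K$ is itself a covering group. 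This yields the first sentence.

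For the enumeration I would first record, using Theorem \ref{thm:main}, that a covering group has order $|\MM(\UT_3(\FF_p))|\cdot|\UT_3(\FF_p)| = p^2\cdot p^3 = p^5$ for odd $p$ and $2^4 = 16$ for $p = 2$, small enough to appeal to an explicit classification. I would match $\Phi$ with the family $\Phi_6$ of \cite{Jam80} by comparing its isoclinism invariants — the isomorphism types of $K/Z(K)$, $Z(K)$ and $[K,K]$, together with the commutator pairing inherited from $\UT_3(\FF_p)$ — against the tabulated families, and then count the nonisomorphic stem members of $\Phi_6$. These are given by presentations whose parameters range over $\FF_p$, and the number of isomorphism types equals the number of orbits of parameter tuples under the isomorphisms induced by $\mathrm{Aut}(\UT_3(\FF_p))$ on the multiplier. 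For generic $p$ these orbits fall into a one-parameter family of size about $p$ together with a fixed number of sporadic classes, giving the total $p + 7$.

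The main obstacle is precisely this orbit count and its behaviour at small primes. The identifications among the parametrized presentations factor through the multiplicative structure of $\FF_p^\times$, so the generic tally $p+7$ must be corrected wherever the relevant class counts degenerate; pinning down the exact number of orbits is the one genuinely laborious step. At $p = 3$ additional coincidences special to $\FF_3$ fuse several of the otherwise distinct groups and lower the total to $7$, while at $p = 2$, where $\MM(\UT_3(\FF_2)) \cong \ZZ/2\ZZ$ and $\UT_3(\FF_2) \cong D_8$, the three covering groups are the maximal-class groups $D_{16}$, $SD_{16}$ and $Q_{16}$ of order $16$, which I would verify directly, e.g. with \cite{GAP}. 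The conceptual content is thus modest; the care lies entirely in the bookkeeping of these isomorphisms and of the exceptional primes.
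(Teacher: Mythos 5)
Your proposal is correct and follows essentially the same route as the paper: the structural assertion is obtained exactly as in the text, from Corollary \ref{c:stem_covers} together with Proposition \ref{p:center_covers} forcing all the exponents $\tilde l_i$ in \eqref{eq:relators_isocline} to vanish when $n=3$, and the enumeration is reduced to identifying the isoclinism family with $\Phi_6$ and reading off the count of its stem groups from the classification in \cite{Jam80}. The one point where you are more explicit than the paper is the prime $p=2$, which \cite{Jam80} does not cover: your direct verification that the covering groups of $\UT_3(\FF_2) \cong D_8$ are $D_{16}$, $SD_{16}$ and $Q_{16}$ supplies the count $3$ that the paper leaves implicit.
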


We note here that the isoclinism family $\Phi_6$ plays a role in coclass theory. Its members appear among the top vertices of the coclass graph $\mathcal G(p,2)$ \cite{May12}, and thus play an important role as starting groups of the $p$-group generation algorithm \cite{OBrien90}.

When $n > 3$, determining nonisomorphic representatives of the covering groups in question becomes a lot more complicated and we were unable to provide analogous results in this case. We have tried approaching the problem by using the theory of central extensions of polycyclic groups \cite{Nickel93}. The group $\UT_n(\FF_p)$ may be presented by the set of generators $\{ {}^r\alpha_i \mid 1 \leq r \leq n-1, 1 \leq i \leq n-r \}$ subject to the power relations ${}^r\alpha_i^m = 1$ and commutator relations $[{}^r\alpha_i, {}^1\alpha_{i-1}] = {}^{r+1}\alpha_{i-1}^{-1}$ for $i \neq 1$, $[{}^r\alpha_i, {}^1\alpha_{i+r}] = {}^{r+1}\alpha_{i}^{-1}$ for $i+r \neq n$, and $[{}^r\alpha_i, {}^1\alpha_k] = 1$ otherwise.
%
%
Here, the generator ${}^r\alpha_i$ corresponds to the commutator $[s_i, s_{i+1}, \dots, s_{i+r-1}]$. The problem now essentially reduces to finding orbits of an action of a certain matrix group, see \cite[Theorem 7.4.8]{Nickel93}. Alas, these calculations turn out to be rather lengthy. In the smallest case of $\UT_4(\FF_3)$, we used {\sf GAP} \cite{GAP} to determine the number of nonisomorphic representatives of its covering groups; there are $47278$ of them. This seems to be quite in contrast with the growth of the number given in Proposition \ref{p:number} and indicates a more chaotic behaviour.
\vspace{0.5\baselineskip}

\noindent {\bf Acknowledgement.} ~ The author would like to thank Primož Moravec for his vital encouragement.

\end{document}